\newtheorem{thmvoid}{}[section]
\newtheorem{theorem}[thmvoid]{Theorem}
\newtheorem{corollary}[thmvoid]{Corollary}
\newtheorem{lemma}[thmvoid]{Lemma}
\newtheorem{proposition}[thmvoid]{Proposition}
\theoremstyle{remark}
\newtheorem{example}[thmvoid]{Example}
\newtheorem{question}[thmvoid]{Question}
\theoremstyle{definition}
\newtheorem{remark}[thmvoid]{Remark}
\numberwithin{equation}{section}
\newcommand{\Spec}{\mathrm{Spec}}
\newcommand{\Gal}{\mathrm{Gal}}
\newcommand{\Res}{\mathrm{Res}}
\newcommand{\XXX}{\mathscr{X}}
\newcommand{\ZZZ}{\mathscr{Z}}
\newcommand{\UUU}{\mathscr{U}}
\newcommand{\VVV}{\mathscr{V}}
\newcommand{\Aa}{\mathbb{A}}
\newcommand{\Qq}{\mathbb{Q}}
\newcommand{\Pp}{\mathbb{P}}
\newcommand{\R}{\Res_{L/K}}
\newcommand{\oK}{\overline{K}}
\title{Hilbert properties under base change in small extensions}
\author{Lior Bary-Soroker}
\address{School of Mathematical Sciences, Tel Aviv University,
Tel Aviv 69978, Israel}
\email{barylior@tauex.tau.ac.il}
\author{Arno Fehm}
\address{Institut f\"{u}r Algebra, Technische Universit\"{a}t Dresden, 01062 Dresden, Germany}
\email{arno.fehm@tu-dresden.de}
\author{Sebastian Petersen}
\address{Institut für Mathematik, Universit\"at Kassel, 34121 Kassel, Germany}
\email{petersen@mathematik.uni-kassel.de}
\begin{document}

\maketitle

\begin{abstract}
We study the preservation of the Hilbert property
and of the weak Hilbert property
under base change in field extensions.
In particular we show that these properties are preserved
if the extension is finitely generated or Galois with finitely generated Galois group,
and we also obtain some negative results.
\end{abstract}

\section{Introduction}

\noindent
The Hilbert property HP of Colliot-Th\'el\`ene--Sansuc and Serre \cite{Serre,CTS}
and the weak Hilbert property WHP introduced later by Corvaja--Zannier \cite{CZ}
are derived from Hilbert's irreducibility theorem and have been studied intensively in recent years, see for example 
the list of references in the introduction of \cite{BFP24}.

It is known that if a variety $X$ over a field $K$ of characteristic zero has HP respectively WHP, then so does the base change $X_L$,
for any finite field extension $L/K$.
The work \cite{BFP24} proves that 
in the case of $X=A$ an abelian variety over a number field,
the same holds true (for WHP) also in very specific infinite Galois extensions $L/K$.

The aim of this note is to provide a few more such base change results,
which are less arithmetic in nature but more general:

\begin{theorem}[cf.\ Cor.~\ref{cor:fg}]\label{int:FGthm}
Let $K$ be a field of characteristic zero,
$X$ a normal $K$-variety,
and $L/K$ a finitely generated field extension.
If $X$ has HP (respectively WHP), 
then so does $X_L$.
\end{theorem}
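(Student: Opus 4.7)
The plan is to factor $L/K$ as a tower $K \subseteq K(t_1,\dots,t_n) \subseteq L$ with $L/K(t_1,\dots,t_n)$ finite algebraic, dispose of the algebraic step by the classical characteristic-zero result recalled in the introduction (that HP and WHP are preserved under finite algebraic base change), and induct on $n$ to reduce the purely transcendental step to the base case $L = K(t)$.

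For the core case $L = K(t)$, fix a thin set $T \subseteq X_{K(t)}(K(t))$ witnessed by finitely many geometrically irreducible covers $f_i\colon Y_i \to X_{K(t)}$ (generically finite of degree $\geq 2$ in the HP case; étale without rational section in the WHP case) together with a proper closed subvariety $Z \subset X_{K(t)}$. The plan is to show that the set
\[
T_0 := \{\, x_0 \in X(K) : x_0 \in T \text{ viewed as a constant } K(t)\text{-point}\,\}
\]
is contained in a thin subset of $X(K)$; HP (respectively WHP) for $X/K$ then produces $x_0 \in X(K) \setminus T_0$, whose associated constant $K(t)$-point of $X_{K(t)}$ lies outside $T$, establishing the desired property of $X_{K(t)}$.

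To bound $T_0$, I would spread the data over a dense open $U \subseteq \mathbb{A}^1_K$: each $f_i$ extends to a cover $\tilde f_i \colon \tilde Y_i \to X \times U$ of the same type, and $Z$ extends to a proper closed subvariety $\tilde Z \subset X \times U$. A constant section at $x_0$ lies in $Z(K(t))$ iff $\{x_0\} \times \mathbb{A}^1 \subseteq \bar Z$, a condition cutting out a proper closed subvariety of $X$ because $\dim \tilde Z \leq \dim X$. A constant section at $x_0$ lifts through $f_i$ iff the pullback $W_{x_0} := \tilde f_i^{-1}(\{x_0\} \times U) \to U$ admits an irreducible component birational to $U$. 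The central step is then to exhibit a thin subset of $X(K)$ containing every such $x_0$: by a Bertini-type generic-irreducibility argument in characteristic zero, the Galois closure of $\tilde f_i$ pulled back along a suitable section $\sigma \colon X \to X \times U$ yields a generically finite geometrically irreducible cover of $X$ of the required type, whose $K$-image together with an exceptional proper closed subvariety of $X$ captures all offending $x_0$.

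The main obstacle is this last step: the section of $W_{x_0} \to U$ witnessing the lift of a $K(t)$-point may be defined over varying open subsets of $U$ as $x_0$ varies, so no single specialization $\sigma$ captures every offending $x_0$ directly. Handling this uniformly calls for a Galois-closure construction combined with specialization techniques from classical Hilbert irreducibility to package the locus $\{x_0 : W_{x_0}\text{ has a section}\}$ into a cover of $X$ of the correct geometric shape. The WHP case additionally requires that the produced cover of $X$ preserves the "étale without rational section" property throughout, consistent with the Corvaja–Zannier thin-set definition.
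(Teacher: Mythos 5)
Your high-level reduction --- factor $L/K$ as a purely transcendental extension followed by a finite algebraic one, and invoke the known finite-extension result for the algebraic part --- is exactly the paper's (Proposition \ref{FG:main}, built from Lemma \ref{FG:fin} and Lemma \ref{FG:pure}). Handling $\mathbb{A}^n$ directly versus inducting on $n$ via $\mathbb{A}^1$ is an immaterial variation. The problem lies in the core step for the transcendental case, which you correctly flag as the ``main obstacle'' but leave unresolved.

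The concern you raise --- that the rational section of $W_{x_0}\to U$ may only be defined on a varying open depending on $x_0$, so that a single specialization $t=s$ cannot capture all offending $x_0$ --- is precisely the crux, and the proposed remedy (a Galois-closure construction ``packaging'' the locus of $x_0$ admitting a section) is neither worked out nor what is actually needed. The resolution in the paper is simpler and does not involve Galois closure. After spreading out, the cover $F\colon\mathcal{Z}\to X_S$ restricts to a finite \'etale morphism over an open $\mathcal{U}\subseteq X_S$ with $\deg(F_s|_{\mathcal{V}_s})=\deg(f)$ for all $s$, and one fixes once and for all an $s\in S(K)$. The candidate thin set in $X$ is then $T_s=(X(K)\setminus\mathcal{U}_s(K))\cup F_s(\mathcal{Z}_s(K))$. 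For $x_0\in T_0\cap\mathcal{U}_s(K)$, the constant section $\hat x_0$ lands in $\mathcal{U}$ over the nonempty open $\hat x_0^{-1}(\mathcal{U})$, which \emph{contains $s$ precisely because $x_0\in\mathcal{U}_s(K)$}; thus $F^{-1}(\hat x_0)$ is finite \'etale over a normal base containing $s$, its connected components are \'etale covers, and the one whose generic fiber carries the $L$-point that witnesses $x_0\in f(Z(L))$ has degree $1$, hence furnishes an honest section hitting the fiber at $s$. So $x_0\in F_s(\mathcal{Z}_s(K))$, and the $x_0$ outside $\mathcal{U}_s(K)$ are swallowed by the proper closed complement. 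Nothing about the ``varying open'' survives as an obstruction once one restricts attention to $\mathcal{U}_s(K)$, and no auxiliary Galois closure is needed; introducing one creates new irreducibility and rationality questions without disposing of the original difficulty.

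Separately, your description of the WHP covers as ``\'etale without rational section'' is not the definition used (here or in Corvaja--Zannier): a strongly thin set is witnessed by covers $f_i\colon Z_i\to X$ that are \emph{ramified}, which is the negation of \'etale, not a strengthening of it. This matters for the argument, since in the WHP branch you must ensure the specialized covers $F_s\colon\mathcal{Z}_s\to X$ remain ramified for all $s$ in a dense open of $S$; the paper does so by openness of the \'etale locus (\cite[IV.17.7.11]{EGAIV4}) after shrinking $S$. With the misstated definition your ``produced cover of the correct type'' requirement is aimed at the wrong target, so even if the Galois-closure step were completed, the WHP case would not follow as written.
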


As an application we then give a new proof of the fact that the class of varieties with HP is closed under products (Corollary \ref{FT:cor}).

Our second base change result is for field extensions $L/K$ that are {\em small}
in the concrete sense that $L/K$ is algebraic and has only finitely many subextensions of degree $n$,
for every $n\in\mathbb{N}$.

\begin{theorem}[cf.\ Cor.~\ref{cor:small}]\label{int:smallthm} 
Let $K$ be a field of characteristic zero,
$X$ a normal $K$-variety
and $L/K$ a small extension.
If $X$ has HP (respectively WHP), then so does $X_L$.
\end{theorem}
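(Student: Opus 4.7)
The plan is to reduce the case of a small (hence possibly infinite algebraic) extension $L/K$ to the finite extension case already handled by Theorem~\ref{int:FGthm}, by means of a Weil restriction construction together with the finiteness of bounded-degree subextensions provided by smallness.

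Suppose $X$ has HP (respectively WHP) over $K$ and let $\pi_i \colon Y_i \to X_L$ for $i = 1, \dots, m$ be the finite covers relevant to HP (respectively WHP): each $Y_i$ geometrically integral, each $\pi_i$ without rational section, and finite étale in the WHP case. Since every $\pi_i$ is defined over a finite subextension of $L/K$, I would first choose $K_0$ with $K \subseteq K_0 \subseteq L$ and $[K_0:K] < \infty$ such that $\pi_i = \pi_i^0 \otimes_{K_0} L$ for morphisms $\pi_i^0 \colon Y_i^0 \to X_{K_0}$; then $X_{K_0}$ has HP (respectively WHP) by Theorem~\ref{int:FGthm}. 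For $x \in X(K_0)$ one has $x \in \pi_i(Y_i(L))$ if and only if some residue field of $(\pi_i^0)^{-1}(x)$, necessarily an extension of $K_0$ of degree at most $d_i := \deg \pi_i^0$, embeds into $L$ over $K_0$. Smallness of $L/K$ combined with $[K_0:K] < \infty$ yields only finitely many intermediate fields $M_1, \dots, M_s$ with $K_0 \subseteq M_j \subseteq L$ and $[M_j:K_0] \le \max_i d_i$, and therefore
\[
\pi_i(Y_i(L)) \cap X(K_0) \;=\; \bigcup_{j=1}^{s} \pi_i^0\bigl(Y_i^0(M_j)\bigr) \cap X(K_0).
\]

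For each pair $(i,j)$ I would then form the Weil restriction fibre product
\[
Z_{ij} \;=\; \Res_{M_j/K_0}\bigl(Y_i^0 \otimes_{K_0} M_j\bigr) \times_{\Res_{M_j/K_0}(X_{M_j})} X_{K_0}
\]
along the diagonal immersion $X_{K_0} \hookrightarrow \Res_{M_j/K_0}(X_{M_j})$, so that $Z_{ij}(K_0)$ surjects onto $\pi_i^0(Y_i^0(M_j)) \cap X(K_0)$. The morphism $Z_{ij} \to X_{K_0}$ admits no rational section, because by Weil restriction adjunction such a section would correspond to a rational section of $\pi_i^0 \otimes_{K_0} M_j$ over $M_j$, which (via $M_j \subseteq L$) would base change to a rational section of $\pi_i$ over $L$, contradicting our hypothesis. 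In the WHP case $Z_{ij} \to X_{K_0}$ is moreover étale, since Weil restriction along finite separable extensions preserves étale morphisms in characteristic zero, and étale-ness is stable under base change. Decomposing $Z_{ij}$ into $K_0$-irreducible components, and discarding those of dimension $<\dim X$ (which only contribute to a thin set of type~I), the image of $Z_{ij}(K_0)$ in $X(K_0)$ is contained in a thin subset of $X_{K_0}$ of the type relevant for HP (respectively WHP).

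The union over the finitely many pairs $(i,j)$ is then itself contained in a thin subset of $X(K_0)$, so HP (respectively WHP) of $X_{K_0}$ implies that its complement is Zariski dense in $X_{K_0}$. Since $L/K_0$ is algebraic, the morphism $X_L \to X_{K_0}$ is integral and in particular closed on underlying spaces, so Zariski density in $X_{K_0}$ of a subset of $X(K_0) \subseteq X(L)$ upgrades to Zariski density of the same subset in $X_L$, producing the required density of $X(L) \setminus \bigcup_i \pi_i(Y_i(L))$ in $X_L$. The main obstacle I anticipate is the component-by-component analysis of $Z_{ij}$: one must verify that no geometrically irreducible component realises a rational section over $X_{K_0}$, and, in the WHP case, that each component actually contributing to $\pi_i^0(Y_i^0(M_j)) \cap X(K_0)$ yields a cover matching the precise geometric-integrality and étale-ness requirements in the definition of WHP.
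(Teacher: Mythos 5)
Your overall strategy mirrors the paper's: descend the covers to a finite subextension $K_0$, use normality of $X_{K_0}$ (EGA~IV, 15.5.2) to bound the residue degrees of fibres, invoke smallness to reduce to finitely many intermediate fields $M_j$, and then apply Weil restriction to pull back to $K_0$. The paper packages the same ideas into two modular lemmas (Lemma~\ref{FG:fin} for a finite extension via Weil restriction, and Lemma~\ref{SM:part} for the residue-degree bound) and then chains them, which keeps the bookkeeping cleaner, but structurally the two arguments are close cousins.

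However, there is a genuine gap in your treatment of WHP, and it starts with the definition. In this paper WHP is formulated via \emph{ramified} covers: a set is strongly thin when it is covered (up to a proper closed subset) by images of covers $f_i$ with $f_i$ \emph{ramified}. You instead describe the covers as ``finite \'etale in the WHP case,'' which is the opposite of what is required, and you then argue that $Z_{ij}\to X_{K_0}$ is \'etale because Weil restriction along a finite separable extension preserves \'etale morphisms. That statement is true but irrelevant: what must be shown is that the relevant components of $Z_{ij}$, after normalization, give \emph{ramified} covers of $X_{K_0}$. This is precisely the delicate point the paper's Lemma~\ref{FG:fin} is built to handle. There the normalized component $\overline{W}$ is base-changed to $L$ and factored as $h_L = f\circ\alpha$, where $\alpha\colon\overline{W}_L\to Z$ is itself a cover, so ramification of $f$ forces ramification of $h_L$ and hence of $h$ (citing \cite[Lemma~2.15]{BFP24}). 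Nothing in your sketch replaces that factorization; the \'etale-preservation fact you quote points in the wrong direction and gives no information about ramification.

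Two smaller issues in the same vein: first, you do not normalize the irreducible components of $Z_{ij}$ before claiming they are covers, which is needed for your objects to fit the definition of thin/strongly thin (and is where the paper introduces $\pi\colon\overline{W}\to W$ and the closed set $B$); second, the phrase ``discarding those of dimension $<\dim X$'' is not quite the right condition, since every component dominating $X_{K_0}$ has full dimension -- what you actually need to rule out is a component giving a degree-one cover, and that is what your ``no rational section'' observation does. For the HP half your argument is essentially sound; for WHP you need to replace the \'etale reasoning by a genuine argument that ramification survives Weil restriction, normalization, and restriction to the diagonal.
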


In fact most of our results are stated for thin and strongly thin sets rather than HP and WHP, adding flexibility.
As opposed to many results in the literature we also try to avoid assuming our varieties are proper and smooth.

In the last section we discuss further questions regarding base change of HP and WHP
and obtain a few negative results.
In particular,
HP and WHP do not go {\em down} finite extensions
(in a strong sense, see Proposition \ref{prop:exQtr}),
and neither ``not HP'' nor ``not WHP'' is preserved in unions of chains, even if the extension is small (Proposition \ref{prop:smallex}).

\section{Finite extensions}

\noindent
We begin by fixing notation and definitions for this note.
Let $K$ always be a field of characteristic zero. 
A {\em $K$-variety} is a separated integral scheme of finite type over $K$. The degree $\deg(f)$ of a finite morphism $f\colon X\to Y$ of $K$-varieties is the degree of the associated function field extension.
A {\em cover} is a finite surjective morphism of normal $K$-varieties.
For a normal $K$-variety $X$,
a subset $T$ of $X(K)$ is {\em thin} (resp.\ {\em strongly thin}) in $X$
if $T\subseteq C(K) \cup \bigcup_{i=1}^r f_i(Z_i(K))$
for some proper closed subscheme $C$ of $X$ and a finite collection $(f_i\colon Z_i\rightarrow X)_{i=1,\dots,r}$ of covers
with ${\rm deg}(f_i)>1$ (resp.\ with $f_i$ ramified) for each $i$.
See \cite[\S2]{BFP24}
for basics on covers.
A normal\footnote{It makes sense to define HP also for non-normal varieties, but in order for us to treat HP and WHP simultaneously we restrict to normal ones.} $K$-variety $X$ has the Hilbert property HP  (resp.\ the weak Hilbert property WHP) if $X(K)$ is not thin (resp.\ not strongly thin) in $X$. 
We can and will always assume that the $Z_i$ are geometrically irreducible, since otherwise $Z_i(K)=\emptyset$.
Also note that every normal $K$-variety with WHP is automatically geometrically irreducible.

The following lemma,
which like many of our results in this work treats thin and strongly thin sets simultaneously, should be considered known to experts, and we include the proof only for completeness:
For thin sets, this is precisely \cite[Prop.~3.2.1]{Serre},
and for strongly thin sets,
\cite[Prop. 3.15]{CDJLZ} gives a proof for the case $X$ smooth proper and $T=X(L)$, which we follow closely.

\begin{lemma}\label{FG:fin}
Let $X$ be a normal $K$-variety,
let $L/K$ be a finite extension
and let $T\subseteq X(L)$. 
If $T$ is thin (resp.\ strongly thin) in $X_L$, then $T\cap X(K)$ is thin (resp.\ strongly thin) in $X$.  
\end{lemma}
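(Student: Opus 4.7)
The plan is to use Weil restriction to descend the data $(C,(f_i)_i)$ witnessing thinness of $T$ in $X_L$ to analogous data over $K$ witnessing thinness of $T\cap X(K)$ in $X$. Let $Y=\R(X_L)$ with its canonical $K$-morphism $\iota\colon X\to Y$ (corresponding via Weil restriction adjunction to the identity $X_L\to X_L$); on $K$-points $\iota$ recovers the inclusion $X(K)\hookrightarrow X(L)=Y(K)$. For each $i$ set $W_i=\R(Z_i)$ and $F_i=\R(f_i)\colon W_i\to Y$, a finite morphism with $W_i$ integral since $Z_i$ is geometrically irreducible and $L/K$ is finite separable. Let $D=\R(C)\subseteq Y$, and define $\tilde C=\iota^{-1}(D)\subseteq X$ together with $\tilde C_i=W_i\times_Y X$ and its finite projection $p_i\colon \tilde C_i\to X$. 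On $K$-points, the identifications $Y(K)=X(L)$ and $W_i(K)=Z_i(L)$ translate the thinness inclusion into $T\cap X(K)\subseteq \tilde C(K)\cup\bigcup_i p_i(\tilde C_i(K))$.

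Next I check the geometric conditions. For $\tilde C\subsetneq X$, I base change to $\oK$: we get $Y_{\oK}=\prod_\sigma X_{\oK}$ indexed by $K$-embeddings $\sigma\colon L\to\oK$, with $\iota_{\oK}$ the diagonal and $D_{\oK}=\prod_\sigma C^\sigma$, each $C^\sigma\subsetneq X_{\oK}$; the diagonal of a variety is never contained in such a product, so $\iota(X)\not\subseteq D$. To exhibit each $\tilde C_i\to X$ as a union of covers, I decompose it into irreducible components: those not surjecting onto $X$ have image a proper closed subscheme of $X$ (absorbed into $\tilde C$, since $p_i$ is finite, hence closed), while surjective components $V$ I normalize to obtain covers $V^\nu\to X$ of normal $K$-varieties. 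The image in $X$ of the non-normal locus of $V$ is likewise a proper closed subscheme (also absorbed), and outside this image the $K$-points of $V$ lift to $V^\nu(K)$.

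Each $V^\nu\to X$ has degree $>1$: otherwise, being an isomorphism of normal integral $K$-varieties, it would provide a section $X\to W_i$ of $F_i$ over $\iota$, hence by Weil restriction adjunction a section $X_L\to Z_i$ of $f_i$, contradicting $\deg f_i>1$ together with the integrality of $Z_i$. This completes the thin case. For the strongly thin case one must additionally show that each $V^\nu\to X$ is ramified. Passing to $\oK$, $\tilde C_{i,\oK}$ becomes the iterated fibre product $\prod_\sigma Z_i^\sigma$ over $X_{\oK}$ of the Galois conjugates of $Z_i$, each of which is ramified over $X_{\oK}$; a dimension count shows every surjective-to-$X$ component $V$ also surjects onto each factor $Z_i^\sigma$, and one transfers ramification of $Z_i^\sigma\to X_{\oK}$ to $V\to X_{\oK}$ via the conormal sequence along $V\to Z_i^\sigma\to X_{\oK}$.

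The principal obstacle I foresee is this last ramification transfer: ensuring that \emph{every} surjective-to-$X$ component of the Weil-restricted fibre product is itself ramified, rather than just the total scheme being ramified. By contrast, the thin case follows cleanly from the non-existence of sections of $f_i$.
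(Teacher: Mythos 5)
Your proposal takes essentially the same route as the paper: Weil restrict the cover, pull back along the canonical morphism $X\to\R(X_L)$, decompose the fiber product into irreducible components, normalize, and verify degree $>1$ resp.\ ramification. Two remarks on points where you diverge from or fall short of the paper's argument.

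First, a minor but necessary reduction you omitted: before invoking $\R(X_L)$ you should reduce to $X$ affine (or quasi-projective), since for a general normal $K$-variety the Weil restriction need not be representable by a scheme; the paper does this reduction explicitly.

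Second, and this is exactly the gap you flagged: the conormal sequence does \emph{not} transfer ramification from $Z_i^\sigma\to X_{\oK}$ to $V\to X_{\oK}$. In the sequence $a^*\Omega_{Z_i^\sigma/X_{\oK}}\to\Omega_{V/X_{\oK}}\to\Omega_{V/Z_i^\sigma}\to 0$ the left-hand arrow need not be injective, so nonvanishing of $\Omega_{Z_i^\sigma/X_{\oK}}$ does not force nonvanishing of $\Omega_{V/X_{\oK}}$. The fact you actually need is: if $a\colon V\to Z$ is a cover of normal varieties in characteristic zero and $b\colon Z\to X$ is a ramified cover, then $b\circ a$ is ramified. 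This is precisely what the paper quotes as \cite[Lemma~2.15]{BFP24}, and the correct mechanism is a ramification-index argument (by purity there is a codimension-one point of $Z$ ramified over $X$; by going-down for finite extensions of normal domains some codimension-one point of $V$ lies above it, and its ramification index over $X$ is a multiple of the one for $Z/X$, hence $>1$), not the conormal sequence.

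Structurally, the paper avoids the bookkeeping with geometric components by base changing only to $L$ rather than $\oK$: using $p''\circ\Delta_L=\mathrm{id}_{X_L}$ it produces a single factorization $h_L=f\circ\alpha$ with $\alpha\colon\overline{W}_L\to Z$ a cover, which yields both $\deg h\ge\deg f>1$ and (via the above transfer lemma) ramification of $h$ in one stroke. Your degree argument via a section of $f_i$ obtained from the Weil-restriction adjunction is a clean alternative for the HP case, but for ramification the factorization through $Z$ (over $L$) is the cleaner tool and is the one you should use in place of the conormal sequence.
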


\begin{proof}
It suffices to consider the cases $T=C(L)$ and $T=f(Z(L))$
for $C$ a closed proper subscheme of $X_L$ and $f\colon Z\to X_L$ a cover with $\deg(f)>1$ (resp.\ a ramified cover)
and $Z$ geometrically irreducible. 
As the projection $p\colon X_L\to X$ is closed,
$C'=p(C)$ is a proper closed subset of $X$, and so $C(L)\cap X(K)\subseteq C'(K)$ is strongly thin in $X$. 
For the case $T=f(Z(L))$,
we may assume that 
$X$ is affine. 
Then the Weil restriction
$\Res_{L/K}(f)\colon \Res_{L/K}(Z)\to \Res_{L/K}(X_L)$ exists and is a cover
of $K$-varieties
(see~\cite[4.10.2]{Scheiderer} for all properties except for integrality,
which is spelled out in \cite[Lemma 1.10]{Diem}, and normality, which can be deduced from
\cite[Thm.~1.3.1]{Weil}).
With
$\Delta\colon X\to\Res_{L/K}(X_L)$
the  diagonal morphism (cf.~\cite[4.2.5]{Scheiderer}),
the cartesian square
$$\xymatrix{
 \R(Z)\ar[d]_{\R(f)} & V\ar[l]\ar[d]^g\\
\R(X_L) & X\ar[l]_-{\Delta}
}$$
gives rise to the following commutative diagram in the category of sets:
$$\xymatrix{
Z(L)\ar[d]_f & \R(Z)(K)\ar[l]_-{\cong}\ar[d]_{\R(f)} & V(K)\ar[l]\ar[d]^g\\
X(L) & \R(X_L)(K)\ar[l]_-{\cong} & X(K)\ar[l]
}$$
The right hand square is cartesian  and the composition of the lower horizontal maps is just the inclusion, 
hence $f(Z(L))\cap X(K)\subseteq g(V(K))$,
and so it suffices to prove that $g(V(K))$ is thin (resp.\ strongly thin) in $X$,
i.e.~that $g(W(K))$ is thin (resp.\ strongly thin) in $X$ for every irreducible component
$W$ of $V$. 
As $g$ is finite and therefore closed,
we can assume without loss of generality that $g|_W\colon W\to X$ is surjective.
The normalization $\pi\colon \overline{W}\to W$
is an isomorphism above a nonempty open subset $W'$ of $W$.
Then $h:=g\circ \pi\colon \overline{W}\to X$ is a cover
and $B:=g(W\setminus W')$ is a proper closed subset of $X$ with
$g(W(K))\subseteq h(\overline{W}(K))\cup B(K)$. 
Thus it is enough to show that $h(\overline{W}(K))$ is thin (resp.\ strongly thin) in $X$. 
For this we have to check that $\deg(h)>1$ (resp.\ $h$ is ramified).  
Again we can assume that the $K$-variety $\overline{W}$ is geometrically irreducible. 
The following diagram commutes,
where $p'$ and $p''$
denote the projections:
$$
 \xymatrix{
Z\ar[d]_f & \R(Z)_L\ar[l]_-{p'}\ar[d]_{\R(f)} & \overline{W}_L\ar[l]_-{\delta}\ar[d]^{h_L}\\
X_L & \R(X_L)_L\ar[l]_-{p''} & X_L\ar[l]_-{\Delta_L}
}
$$
As $p''\circ \Delta_L={\rm id}_{X_L}$ (see again \cite[4.2.5]{Scheiderer}), the map 
$\alpha:=p'\circ\delta\colon\overline{W}_L\to Z$ is a cover
with $h_L=f\circ \alpha$.
This implies $\deg(h)=\deg(h_L)\ge \deg(f)>1$. 
If $f$ is ramified, then so is $h_L$ (cf.\ \cite[Lemma 2.15]{BFP24}) and thus also $h$.
\end{proof}

\section{Finitely generated extensions}

\noindent
In this section we discuss HP and WHP -- and more precisely thin and strongly thin sets -- under base change in finitely generated extensions.

\begin{lemma}  \label{FG:pure}
Let $S$ be a $K$-variety
with $S(K)$ dense in $S$,
and write $L=K(S)$. 
Let $X$ be a normal $K$-variety,
and let $T\subseteq X(L)$. 
If $T$ is thin (resp.\ strongly thin) in $X_L$, then $T\cap X(K)$ is thin (resp.\ strongly thin) in $X$.  
\end{lemma}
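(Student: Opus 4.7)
The plan is to spread the data witnessing thinness of $T$ in $X_L$ out over an open subset of $S$, and then specialize at a $K$-rational point of that open provided by the density hypothesis.

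It suffices to treat the two cases $T=C(L)$ with $C\subsetneq X_L$ a proper closed subscheme, and $T=f(Z(L))$ with $f\colon Z\to X_L$ a cover of degree $>1$ (resp.\ ramified) and $Z$ geometrically irreducible. I would choose a nonempty open affine $U\subseteq S$ with $K(U)=L$ and spread out: extend $C$ to a closed subscheme $\mathcal{C}\subseteq X_U := X\times_K U$ with generic fibre $C$, respectively $f$ to a finite surjective morphism $\mathcal{F}\colon\mathcal{Z}\to X_U$ of integral $K$-varieties with generic fibre $f$. Successively shrinking $U$, one arranges that (i) every fibre $\mathcal{C}_u\subsetneq X$ is a proper closed subscheme; (ii) $\mathcal{Z}$ is normal, $\mathcal{F}$ is flat over $U$, and every fibre $\mathcal{F}_u\colon\mathcal{Z}_u\to X$ is a cover of degree $\deg(f)$; and (iii) if $f$ is ramified, so is every $\mathcal{F}_u$. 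Density of $S(K)$ in $S$ then allows one to pick $s\in U(K)$.

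The key claim to verify is $T\cap X(K)\subseteq\mathcal{C}_s(K)$ in the first case and $T\cap X(K)\subseteq\mathcal{F}_s(\mathcal{Z}_s(K))$ in the second; together with (i)--(iii) this yields the conclusion. For the first, take $x\in T\cap X(K)$: the graph $\{x\}\times U\subseteq X_U$ is a closed subscheme with generic point $(x,\eta_U)$, and $x_L\in C(L)$ amounts precisely to $(x,\eta_U)\in\mathcal{C}$; closedness of $\mathcal{C}$ forces $\{x\}\times U\subseteq\mathcal{C}$, so $x\in\mathcal{C}_s(K)$. For the second, take $x\in T\cap X(K)$ and $y\in Z(L)$ with $f(y)=x_L$. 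Pulling $\mathcal{F}$ back along the closed immersion $\{x\}\times U\hookrightarrow X_U$ yields a finite morphism $\psi\colon\mathcal{Y}\to U$, and the $L$-point $y$ picks out an irreducible component $\mathcal{Y}'\subseteq\mathcal{Y}$ for which $\psi|_{\mathcal{Y}'}$ is finite and birational; since $U$ is normal, its normalization agrees with $U$ itself, producing a section $\tau\colon U\to\mathcal{Y}'\hookrightarrow\mathcal{Z}$ of $\mathcal{F}$ along the graph of $x$. Evaluating at $s$ gives $\tau(s)\in\mathcal{Z}_s(K)$ with $\mathcal{F}_s(\tau(s))=x$.

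The main obstacle is condition (iii), propagating ramification of $f$ to ramification of every closed fibre $\mathcal{F}_u$. This should follow by spreading the branch divisor of $f$ in $X_L$ out to a codimension-one closed subscheme of $X_U$ whose formation commutes with fibrewise specialization on an open subset of $U$---a standard argument in characteristic zero, analogous to the ramification bookkeeping encoded in \cite[Lemma 2.15]{BFP24} already invoked in the proof of Lemma~\ref{FG:fin}. The remaining openness conditions in (ii) are routine consequences of standard results on flat families, using that $Z$ is geometrically integral and geometrically normal because it is a normal, geometrically irreducible variety over the perfect field $L$.
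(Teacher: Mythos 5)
Your proof is correct and follows the same spreading-out-and-specialize strategy as the paper's; the genuine variation is in how the section over the base is extracted. The paper first shrinks $X_L$ to an open subset over which $f$ is \'etale and spreads that out as well, so that the pullback $F^{-1}(\hat{x})$ becomes a finite \emph{\'etale} $S$-scheme; over the normal base $S$ its connected components are finite \'etale covers, and the one whose generic fibre is $\Spec(L)$ has degree one and hence is $\cong S$, giving the section. You avoid this \'etale reduction entirely: you pull $\mathcal{F}$ back along $\{x\}\times U$ directly, and obtain the section from the observation that the irreducible component of $\mathcal{Y}$ through the image of $y$ is integral, finite, birational and surjective onto $U$, hence $\cong U$ by normality of $U$. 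Both routes are valid, and yours is a pleasant shortcut that also dispenses with the paper's book-keeping around the open $\UUU$ and the side term $X(K)\setminus\UUU_s(K)$ in the thin set. Two points should be tightened. First, you invoke normality of $U$ without having arranged it; as the paper does for $S$, one should begin by shrinking $U$ to a smooth (hence normal) nonempty open, which is possible since ${\rm char}(K)=0$ and preserves density of $K$-points. Second, the spot you flag yourself, condition (iii), is phrased misleadingly: since $X$ is only assumed normal, the ramification locus of $f$ need not be a divisor (purity of the branch locus would require $X$ regular). But what you actually need is simpler and is exactly what the paper extracts from \cite[IV.17.7.11]{EGAIV4}: the non-\'etale locus $R\subseteq\mathcal{Z}$ of the flat morphism $\mathcal{F}$ is closed and, fibrewise, coincides with the non-\'etale locus of $\mathcal{F}_u$; its image under the finite-type morphism $\mathcal{Z}\to U$ is constructible by Chevalley and contains the generic point because $f$ is ramified, hence contains a nonempty open, and shrinking $U$ to that open makes every $\mathcal{F}_u$ ramified. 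Phrasing (iii) this way closes the only real gap in the write-up.
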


\begin{proof} 
It suffices to consider the case
$T=C(L)\cup f(Z(L))$
with $C$ a closed proper subscheme of $X_L$ and $f\colon Z\to X_L$ a cover with $\deg(f)>1$ (resp.\ a ramified cover) and $Z$ geometrically irreducible. 
There is a nonempty open subscheme $U$ of $X_L\setminus C$ such that 
$V:=f^{-1}(U)\to U$ is \'etale.  

After replacing $S$ by a suitable nonempty open subscheme we can assume that
$S$ is normal,
and that $Z$ and $U$ extend to $S$-schemes $\ZZZ$ and $\UUU$ 
of finite type (cf.\ \cite[IV.8.8.2]{EGAIV3}) in such a way that the morphisms $\ZZZ\to S$ and $\UUU\to S$ are flat, separated and surjective (cf.\ \cite[IV.8.9.4, IV.8.10.5]{EGAIV3}).
Moreover, as $Z$ and $U$ are geometrically irreducible normal varieties,
we can assume that all fibers of $\ZZZ\to S$ and of $\UUU\to S$ are geometrically irreducible normal varieties (cf.\ \cite[IV.9.9.5, IV.9.7.7]{EGAIV3}, 
\cite[III.9.1.13]{EGAIII1}).

After replacing $S$ by one of its nonempty open subschemes and replacing the rest accordingly, 
$f\colon Z\to X_L$ extends to a finite surjective $S$-morphism $F\colon \ZZZ\to X_S$, 
and the inclusion $U\to X_L$ extends to an open $S$-immersion $\UUU\to X_S$
(cf.\ \cite[IV.8.8.2, IV.8.10.5]{EGAIV3}). 
Then $V$ is the generic fiber of $\VVV:=F^{-1}(\UUU)$. For every $s\in S$ we have the following diagrams 
with cartesian squares.

$$\xymatrix{Z \ar[r]^{f}\ar[d] & X_{L} \ar[r]\ar[d] & \Spec(L)\ar[d] &
V \ar[r]^{f|_{V}}\ar[d] & U \ar[r]\ar[d] & \Spec(L)\ar[d]\\
\ZZZ \ar[r]^{F} & X_S \ar[r] & S &
\VVV \ar[r]^{F|_{\VVV}} & \UUU \ar[r] & S\\
\ZZZ_{s}\ar[u]\ar[r]^{F_{s}} & X_{K(s)}\ar[r]\ar[u] & \Spec(K(s))\ar[u]&
\VVV_{s}\ar[u]\ar[r]^{F_{s}|_{\VVV_{s}}} & \UUU_s \ar[r]\ar[u] & \Spec(K(s))\ar[u]}$$

Since $f|_V$ is \'etale,
after shrinking $S$ once more we can assume that for every $s\in S$ the morphism
$F_{s}|_{\VVV_{s}}\colon \VVV_{s}\to \UUU_s$
is \'etale and $\deg(F_s|_{\VVV_s})={\rm deg}(f)$ (cf.\ \cite[IV.17.7.11]{EGAIV4}). 
As $\ZZZ$ is flat over $S$, it follows that the morphism $F|_{\VVV}\colon \VVV\to \UUU$ is \'etale
(cf.\ \cite[IV.17.8.2]{EGAIV4}). 
Finally, if $f$ is ramified,
we can assume that
$F_{s}$ is ramified for every $s\in S$
\cite[IV.17.7.11]{EGAIV4}.

As $S(K)$ is dense in $S$ there exists $s\in S(K)$, 
and 
$$
 T_s:=(X(K)\setminus \UUU_s(K))\cup F_s(\ZZZ_s(K))
$$ 
is thin (resp.~strongly thin) in $X$,
so it suffices to prove that $T\cap X(K)\subseteq T_s$. 
Let $x\in T\cap X(K)$. 
If $x\notin \UUU_s(K)$ we are done,
so assume $x\in \UUU_s(K)$. 
We prove that $x\in F_s(\ZZZ_s(K))$. 
Let
$$
 \hat{x}\colon S\to X_S=X\times_K S,\ t\mapsto (x,t)
$$ 
be the constant $S$-morphism corresponding to $x$. 
The set $\hat{x}^{-1}(\UUU)$ is open in $S$ and contains $s$. 
We replace $S$ by $\hat{x}^{-1}(\UUU)$ and can thus view $\hat{x}$ as a section 
$\hat{x}\colon S\to \UUU$ of $\UUU\to S$.
Consider the cartesian square
$$\xymatrix{ F^{-1}(\hat{x})\ar[r]\ar[d] & S\ar[d]^{\hat{x}}\\ 
\VVV\ar[r] & \UUU}$$
and note that $F^{-1}(\hat{x})$ is a finite \'etale $S$-scheme whose generic fiber is $f^{-1}(x)$ and
whose fiber over $s$ is $F_{s}^{-1}(x)$. In particular,
every connected component of $F^{-1}(\hat{x})$ is an \'etale cover of $S$.
So since
$f^{-1}(x)\to \Spec(L)$ has a section by the assumption
$x\in T$,
also $F^{-1}(\hat{x})\to S$ has a section, 
and this implies that  $F_{s}^{-1}(x)\to \Spec(K)$ has a section. It follows that $x\in F_s(\ZZZ_s(K))$ as desired. 
\end{proof}

\begin{proposition}\label{FG:main} Let $X$ be a normal $K$-variety, $L/K$ a finitely generated field extension 
and $T\subseteq X(L)$. 
If $T$ is thin (resp.\ strongly thin) in $X_L$, then $T\cap X(K)$ is thin (resp.\ strongly thin) in $X$. 
\end{proposition}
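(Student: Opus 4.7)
The plan is to reduce the finitely generated extension $L/K$ to the two cases already handled, namely a finite extension (Lemma \ref{FG:fin}) and the function field of a variety with dense rational points (Lemma \ref{FG:pure}), by factoring $L/K$ through a suitable purely transcendental intermediate field. Concretely, I would set $n=\trdeg(L/K)$, pick a transcendence basis $t_1,\dots,t_n$ of $L/K$, and let $K_0=K(t_1,\dots,t_n)$ so that $L/K_0$ is finite (algebraic) while $K_0$ is the function field of $\mathbb{A}^n_K$.

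The proof then proceeds in two steps, each a single application of the lemmas. First, Lemma \ref{FG:fin} applied to the finite extension $L/K_0$ and to $T\subseteq X(L)$ shows that $T\cap X(K_0)$ is thin (resp.\ strongly thin) in $X_{K_0}$. Second, because $K$ has characteristic zero, hence is infinite, $\mathbb{A}^n_K(K)=K^n$ is Zariski dense in $\mathbb{A}^n_K$, so Lemma \ref{FG:pure} applies with $S=\mathbb{A}^n_K$ (giving $K(S)=K_0$) and with the set $T\cap X(K_0)\subseteq X(K_0)$; it yields that $\bigl(T\cap X(K_0)\bigr)\cap X(K)=T\cap X(K)$ is thin (resp.\ strongly thin) in $X$. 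This is exactly the desired conclusion.

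There is no serious obstacle: the content of the proposition is really that the two lemmas compose. The only points to verify are the standard facts that a finitely generated field extension factors as a finite extension of a purely transcendental one and that affine space over an infinite field has Zariski dense rational points; everything else is bookkeeping about which subset is being passed through which cover. One could equivalently do the purely transcendental step by induction on $n$ using $S=\mathbb{A}^1_K$ at each stage, but treating it in one shot with $S=\mathbb{A}^n_K$ seems cleanest.
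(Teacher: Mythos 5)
Your proposal is correct and matches the paper's proof essentially verbatim: both pick a purely transcendental intermediate field $F=K(\mathbb{A}^n)$ with $L/F$ finite, then apply Lemma~\ref{FG:fin} to $L/F$ followed by Lemma~\ref{FG:pure} with $S=\mathbb{A}^n_K$. Nothing to add.
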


\begin{proof} 
Fix an intermediate field $K\subseteq F\subseteq L$ such that $F/K$ is purely transcendental, i.e.~$F=K(\mathbb{A}^n)$ for some $n$, and $L/F$ is algebraic, hence finite.
The claim then follows from
Lemma \ref{FG:fin} applied to $L/F$
and
Lemma \ref{FG:pure} 
applied to $S=\mathbb{A}^n$.
\end{proof}

\begin{corollary}\label{cor:fg}
Let $X$ be a normal $K$-variety
and $L/K$ a finitely generated field extension.
If $X$ has HP (resp.\ WHP), then $X_L$ has HP (resp.\ WHP).     
\end{corollary}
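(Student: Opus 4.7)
The plan is to deduce the corollary as a direct contrapositive application of Proposition~\ref{FG:main} with $T=X_L(L)$; there is essentially no new content, and the corollary is just a restatement of the proposition in the language of HP and WHP.

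Before invoking the proposition I would check that $X_L$ is a normal $L$-variety, so that the statements ``$X_L$ has HP'' and ``$X_L$ has WHP'' are even defined. Normality, separatedness and finite type pass from $X$ to $X_L$ because $L/K$ is separable (characteristic zero). For integrality, HP (and a fortiori WHP) of $X$ forces $X(K)\neq\emptyset$, since an empty set of rational points would be trivially thin. A $K$-rational point of the integral normal variety $X$ then forces the algebraic closure of $K$ in $K(X)$ to equal $K$: the Stein-type factorization $X\to\Spec(K')$ with $K'$ this algebraic closure would have to admit a $K$-algebra section $K'\to K$, so $K'=K$. Hence $X$ is geometrically connected, and combined with normality this yields geometric irreducibility, so $X_L$ is integral.

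With this setup done, I would argue by contraposition. Assume $X_L$ fails HP (resp.\ WHP); then $T:=X_L(L)=X(L)$ is thin (resp.\ strongly thin) in $X_L$. Proposition~\ref{FG:main} applied to this $T$ yields that $T\cap X(K)=X(K)$ is thin (resp.\ strongly thin) in $X$, contradicting the hypothesis that $X$ has HP (resp.\ WHP). This completes the proof.

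The hard part of the overall argument is already absorbed into Proposition~\ref{FG:main} itself, via Lemma~\ref{FG:fin} (the finite-extension case, handled through Weil restriction and a normalization argument) and Lemma~\ref{FG:pure} (the purely transcendental case, handled by spreading out the cover over a model of $S$ and specializing to a $K$-rational point). For the corollary itself, the only point requiring any care is the verification that $X_L$ is integral, and this is settled by the observation above that HP forces $X$ to be geometrically irreducible.
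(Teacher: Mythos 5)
Your proof is correct and follows exactly the route the paper intends: Corollary~\ref{cor:fg} is stated with no proof precisely because it is the immediate contrapositive of Proposition~\ref{FG:main} with $T=X(L)$. The extra paragraph verifying that $X_L$ is an integral (hence normal) $L$-variety is a sound and welcome bit of care that the paper glosses over — it only records the analogous observation for WHP in its preliminaries — but it does not change the argument.
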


In \cite{Serre}, Serre had asked whether the product of two varieties with HP has HP again.
This was positively answered in \cite{BFP}
by proving and applying a fibration theorem for HP.
We now 
give another proof of this product theorem
by first proving a variant of the fibration theorem using the generic fiber and then combining this with Corollary \ref{cor:fg}.

\begin{proposition}\label{FT:main}
Let $p\colon \XXX\to S$ be a dominant morphism of normal $K$-varieties.
Assume that $S$ has HP 
and that the generic fiber $X$ of $p$ is normal and has HP. 
Then $\XXX$ has HP.
\end{proposition}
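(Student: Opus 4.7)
The plan is to argue by contradiction: use HP of the generic fiber $X$ to produce a sufficiently generic rational section $\sigma\colon S\dashrightarrow \XXX$ of $p$, and then HP of $S$ to specialize $\sigma$ to a $K$-point of $\XXX$ outside a putative thin cover of $\XXX(K)$. Throughout set $L=K(S)$.

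Suppose, for contradiction, that $\XXX(K)\subseteq C(K)\cup\bigcup_i f_i(Z_i(K))$ with $C\subsetneq\XXX$ proper closed and each $f_i\colon Z_i\to\XXX$ a cover of degree $>1$. The first step is to transfer this thin cover to the generic fiber $X$: the subscheme $C_X:=C\times_\XXX X$ is proper closed in $X$ since $C$ misses the generic point of $\XXX$, and for each $i$ the base change $Z_{i,X}:=Z_i\times_\XXX X\to X$ is finite of degree $\deg(f_i)>1$. Since $Z_i$ is an integral $K$-variety, the generic fiber of $f_i$ over $\XXX$ is $\Spec$ of a single finite extension of $K(\XXX)$, so $Z_{i,X}$ has a unique irreducible component dominating $X$, whose normalization is a cover of $X$ of degree $\deg(f_i)>1$; the images of the non-dominant components in $X$ are proper closed and are absorbed into $C_X$. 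Applying HP of $X$ produces $\xi\in X(L)$ outside this thin set, so in particular $\xi\notin C(L)$ and $\xi\notin f_i(Z_i(L))$ for every $i$.

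Next, $\xi$ spreads to a section $\sigma\colon U\to\XXX$ of $p$ over a dense open $U\subseteq S$, and I would pull the original thin cover back along $\sigma$. The preimage $\sigma^{-1}(C)\cap U$ is proper closed in $U$ by the choice of $\xi$, and each $\hat Z_i:=Z_i\times_\XXX U\to U$ is finite. The crucial point is that, because $\xi\notin f_i(Z_i(L))$, no irreducible component of $\hat Z_i$ can be birational to $U$: such a component would generically split $\hat Z_i\to U$ and so exhibit an $L$-point of $Z_i$ lifting $\xi$. Hence, after taking reduced irreducible components of $\hat Z_i$, normalizing those that dominate $U$, and absorbing the non-dominant components together with the non-normal loci of the dominant ones into the closed part, I obtain a thin set $T_U\subseteq U(K)$ containing $\{\,s\in U(K):\sigma(s)\in T\,\}$, where $T$ denotes the original thin cover.

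Finally, HP of $S$ restricts to HP of any dense open $U\subseteq S$ (since a thin cover of $U$ extends to one of $S$ by taking normalizations of $S$ in the corresponding finite extensions of $K(S)$ and putting $S\setminus U$ into the closed part), so some $s\in U(K)\setminus T_U$ exists, and then $\sigma(s)\in \XXX(K)\setminus T$ gives the desired contradiction. The main obstacle is controlling the degrees of the dominant components of $\hat Z_i\to U$: a degree-$1$ dominant component would yield a rational section of $f_i$ over $\sigma(U)$ and hence an $L$-point of $Z_i$ above $\xi$, which is exactly what the choice of $\xi$ via HP of $X$ was engineered to exclude. The normalization step is a secondary nuisance, since $K$-points at non-normal singularities need not lift, but the non-normal locus maps to a proper closed subset of $U$ and is harmlessly absorbed.
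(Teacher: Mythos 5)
Your proof is correct and follows essentially the same strategy as the paper's: use HP of the generic fiber $X$ to find a point $\xi\in X(L)$ avoiding the induced covers, spread $\xi$ to a section $\sigma\colon U\to\XXX$ over a dense open $U\subseteq S$, pull the covers back along $\sigma$, observe that no component of the pullback can have degree $1$ over $U$ (else the generic fiber would produce an $L$-point of $Z_i$ over $\xi$), and finish with HP of $S$. Two small remarks on where the paper's write-up is leaner. First, the paper begins by shrinking $\UUU$ so that all the $F_i$ become \'etale and then replaces $\XXX$ by $\UUU$; this makes every subsequent fibre product (the generic fibre $Z_i\to X$ and the pullback $\ZZZ_{i,\sigma}\to S$) automatically finite \'etale, hence normal and a disjoint union of covers, so no normalizations or ``non-normal locus'' bookkeeping are needed at the second pullback. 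Your version avoids the \'etale reduction at the cost of this extra bookkeeping, which you handle correctly. Second, in your first step the caution is actually unnecessary: $Z_{i,X}=Z_i\times_{\XXX}X$ is the generic fibre of $Z_i\to S$, i.e.\ a localization of the normal integral scheme $Z_i$, and is therefore itself integral and normal with function field $K(Z_i)$; so it is already a cover of $X$ of degree $\deg(f_i)$, with no extra components and no normalization required. The place where your care is genuinely needed is the pullback $\hat Z_i=Z_i\times_{\XXX}U$ along the section, which can indeed be reducible and non-normal; the paper sidesteps this by the \'etale reduction.
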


\begin{proof}
Let $\UUU$ be a nonempty open subset of $\XXX$, and let $(F_i\colon\ZZZ_i\to\XXX)_{i=1,\dots,r}$ be a family of covers with $\deg(F_i)>1$ 
and $\ZZZ_i$ geometrically irreducible for all $i$. 
We have to show that $\UUU(K)\setminus \bigcup_{i=1}^r F_i(\ZZZ_i(K))$ is nonempty.
After replacing $\UUU$ by a smaller nonempty open set we can assume that $f_i^{-1}(\UUU)\to \UUU$ is \'etale for all $i$. 
Thus, replacing $\XXX$ by $\UUU$, we can assume the $F_i$ \'etale right from the outset and have to prove that $\XXX(K)\setminus \bigcup_{i=1}^r F_i(\ZZZ_i(K))$ is not empty. 
For each $i$,
the generic fiber $f_{i}\colon Z_{i}\to X$ of $F_i$ is an \'etale cover of $X$ with $\deg(f_{i})=\deg(F_i)>1$.
As $X$ has HP, there exists a point 
$$
 x\in X(L)\setminus \bigcup_{i=1}^r f_{i}(Z_{i}(L)),
$$ 
where $L=K(S)$.
After replacing $S$ by one of its nonempty open subsets (and replacing the rest accordingly) $x$ extends to a section $\sigma\colon S\to\XXX$ of $p$. 
Form the cartesian square
$$
 \xymatrix{
\ZZZ_{i, \sigma}\ar[r]^{F_{i,\sigma}}\ar[d] & S\ar[d]^\sigma\\
\ZZZ_i \ar[r]^{F_i}& \XXX
}
$$
and note that the generic fiber of the finite \'etale morphism  $F_{i,\sigma}$ is $f_{i}^{-1}(x)$ and that $f_{i}^{-1}(x)\to\Spec(L)$
does not admit a section by our choice of $x$. 
Thus $F_{i,\sigma}$ does not admit a section. 
It follows that for every $i$ the connected components of $\ZZZ_{i,\sigma}$ 
are all of degree $>1$ over $S$. 
As $S$ has HP, there exists $s\in S(K)\setminus \bigcup_{i=1}^r F_{i, \sigma}(\ZZZ_{i, \sigma}(K))$. 
Thus $\sigma(s)\in\XXX(K)\setminus \bigcup_{i=1}^r F_i(\ZZZ_i(K))$, as desired.
\end{proof}

\begin{corollary}[{\cite[Cor.~3.4]{BFP}}]\label{FT:cor} 
Let $X$ and $Y$ be normal $K$-varieties. 
If both $X$ and $Y$ have HP, so does $X\times_K Y$.
\end{corollary}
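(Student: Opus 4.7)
The plan is to apply Proposition~\ref{FT:main} to the second projection $p\colon X\times_K Y\to Y$. Setting $L=K(Y)$, the generic fiber of $p$ is canonically $X_L$. Since $L/K$ is finitely generated and $X$ has HP, Corollary~\ref{cor:fg} gives that $X_L$ has HP, while the hypothesis that $Y$ has HP takes care of the base $S=Y$ in Proposition~\ref{FT:main}. The morphism $p$ is obviously dominant.

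The one thing that still needs to be checked to invoke Proposition~\ref{FT:main} is that $X\times_K Y$ (and similarly its generic fiber $X_L$) really is a normal $K$-variety. Here I would argue as follows. Both $X$ and $Y$ are geometrically irreducible: HP implies WHP, since for a cover of normal varieties being ramified forces the degree to be $>1$ (a degree-one cover of normal varieties is an isomorphism by Zariski's Main Theorem, hence unramified), and the remark after the definition of WHP says that any normal variety with WHP is geometrically irreducible. Consequently $X\times_K Y$ is integral. Because $K$ is perfect, normality of $X$ and of $Y$ coincides with geometric normality, and the product of two geometrically normal $K$-varieties is normal; the same reasoning applied over $L$ shows that $X_L$ is a normal $L$-variety.

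With all the hypotheses of Proposition~\ref{FT:main} in place, it follows that $X\times_K Y$ has HP. I do not expect any serious obstacle in this argument: the genuine content sits in Corollary~\ref{cor:fg} (via Lemma~\ref{FG:pure}) and in Proposition~\ref{FT:main}, and the present corollary is essentially a matter of correctly identifying the generic fiber of $\pi_Y$ as $X_L$ and verifying the routine normality/irreducibility bookkeeping above.
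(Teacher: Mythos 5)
Your proof is correct and follows essentially the same route as the paper: apply Proposition~\ref{FT:main} to the projection $X\times_K Y\to Y$, using Corollary~\ref{cor:fg} to conclude that the generic fiber $X_{K(Y)}$ has HP. The paper leaves the normality and integrality of $X\times_K Y$ and of $X_{K(Y)}$ implicit, whereas you verify them explicitly (via HP $\Rightarrow$ WHP $\Rightarrow$ geometric irreducibility and perfectness of $K$), and that verification is correct.
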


\begin{proof}  
Since $X$ has HP,
by Corollary \ref{cor:fg}
also the generic fiber $X_{K(Y)}$ of
the projection $X\times_K Y\to Y$
has HP. 
Therefore $X\times_K Y$ has HP by Proposition \ref{FT:main}. 
\end{proof}

By \cite[Thm.~1.9]{CDJLZ},
Corollary \ref{FT:cor} holds for WHP 
(and $X,Y$ smooth proper $K$-varieties) instead of HP when $K$ is finitely generated, but the case of general $K$ is open.
Such a product theorem for WHP would follow from Corollary \ref{cor:fg} if one could prove Proposition \ref{FT:main} for WHP instead of HP.
Even the following weaker version seems open:

\begin{question}\label{q:generic}
In Proposition \ref{FT:main},
if we assume that $X$ has only WHP instead of HP,
does it still follow that $\XXX$ has WHP?
\end{question}

\section{Small extensions}

\noindent
In this section we discuss HP and WHP -- and again more precisely thin and strongly thin sets -- in small extensions as defined in the introduction.
A {\em Galois} extension is small if and only if its Galois group is {\em small} in the sense
that it has only finitely many open subgroups of index $n$ for every $n\in\mathbb{N}$,
and every (topologically) finitely generated profinite group is small.
See \cite[Chapter 16.10]{FJ} for a discussion of small profinite groups and their relation to finitely generated profinite groups.
An arithmetically important example of a small Galois extension
is the maximal Galois extension of a number field unramified outside a given finite set of primes, see \cite[Example 16.10.9]{FJ}.
For a combinatorial sufficient condition for a Galois extension to have small Galois group see \cite[Proposition 2.5]{BF_random}.
Small {\em absolute} Galois groups play an important role in many model theoretic results, see \cite{FehmJahnke}.

\begin{lemma} \label{SM:part}
Let $X$ be a normal $K$-variety and $f\colon Z\to X$ a cover of degree $n$.
Let $L/K$ be an algebraic extension
and let $L_0$ be the compositum of all subextensions of $L/K$ of degree at most $n$. 
Then $f(Z(L))\cap X(K)= f(Z(L_0))\cap X(K)$. 
\end{lemma}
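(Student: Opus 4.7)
The containment $f(Z(L_0))\cap X(K)\subseteq f(Z(L))\cap X(K)$ is immediate from $L_0\subseteq L$, so only the reverse direction needs work. Fix $x\in X(K)$ with $x=f(z)$ for some $z\in Z(L)$, write $\bar{x}\in X$ for the closed point corresponding to $x$ (so $\kappa(\bar{x})=K$), and let $\tilde{z}\in Z$ be the image point of the morphism $z\colon\Spec(L)\to Z$. This morphism factors canonically as $\Spec(L)\to\Spec(\kappa(\tilde{z}))\to Z$, so determines an embedding $\kappa(\tilde{z})\hookrightarrow L$. Because $f$ is finite, the fiber $f^{-1}(\bar{x})$ is a finite $K$-scheme, hence $0$-dimensional with discrete underlying space, so $\tilde{z}$ is a closed point of $Z$ and $\kappa(\tilde{z})$ is a finite extension of $K$ realized as a subfield of $L$.

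The heart of the proof is the bound $[\kappa(\tilde{z}):K]\leq n$; once this is in place, $\kappa(\tilde{z})$ is a subextension of $L/K$ of degree at most $n$, hence lies in $L_0$ by definition, and the canonical factorization above followed by $\Spec(L_0)\to\Spec(\kappa(\tilde{z}))$ produces the desired point $z'\in Z(L_0)$ with $f(z')=x$. To obtain the bound, I would work with the semi-local ring $B=(f_*\OOO_Z)_{\bar{x}}$, which is finite over the local normal domain $A=\OOO_{X,\bar{x}}$ and embeds into $K(Z)$. Let $\mathfrak{n}\subset B$ be the maximal ideal corresponding to $\tilde{z}$, so $B/\mathfrak{n}=\kappa(\tilde{z})$. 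Pick a primitive element $\bar{\beta}\in\kappa(\tilde{z})$ of $\kappa(\tilde{z})/K$ (possible since $\mathrm{char}\,K=0$) and, using that $B$ is semi-local, lift it through the Chinese remainder theorem to some $\beta\in B$. Since $\beta\in K(Z)$ is integral over the normal ring $A$, its minimal polynomial over $K(X)$ is monic with coefficients in $A$, of degree $[K(X)(\beta):K(X)]\leq[K(Z):K(X)]=n$. Reducing this polynomial modulo $\mfm$ gives a polynomial in $K[T]$ of degree $\leq n$ having $\bar{\beta}$ as a root, forcing $[\kappa(\tilde{z}):K]=[K(\bar{\beta}):K]\leq n$.

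The main obstacle is precisely this residue-field degree bound; everything else is bookkeeping. Normality of $X$ is essential here: it is what allows the minimal polynomial of an integral element of $K(Z)$ to have coefficients in $A$, and without it one can manufacture covers with residue field degrees in a fiber exceeding the generic degree (e.g.\ branches of a node appearing in the normalization over a single rational point), which would scuttle the argument.
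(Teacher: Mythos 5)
Your proof is correct. It proves the same key inequality as the paper — that residue fields in the fiber $f^{-1}(\bar x)$ have degree at most $n$ over $K$ — but via a self-contained commutative-algebra argument, whereas the paper's proof is shorter and cites two results from EGA~IV: normality of $X$ makes the finite surjective $f$ universally open (14.4.4), and then 15.5.2 gives $\sum_{z\in f^{-1}(x)}[K(z):K]\le n$ directly (this is a slightly stronger statement than yours, bounding the sum rather than a single summand, though you only need the weaker version). Both approaches use normality of $X$ in an essential way; in your version it guarantees that $A=\OOO_{X,\bar x}$ is integrally closed, so that the minimal polynomial of your lifted primitive element $\beta$ over $K(X)$ has coefficients in $A$, and can therefore be reduced modulo $\mfm$ to bound $[\kappa(\tilde z):K]$. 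Your remark that the argument would fail over non-normal $X$ (branches over a node) is the right intuition; in the paper's formulation the same failure corresponds to loss of universal openness. One small piece of bookkeeping your write-up glides over but that does go through: the composite $K=\kappa(\bar x)\to\kappa(\tilde z)\hookrightarrow L_0$ is the structural inclusion $K\subseteq L_0$ because $z$ is a $K$-morphism, so the resulting $z'\in Z(L_0)$ really does map to $x\in X(K)\subseteq X(L_0)$ under $f$. Your argument is a nice alternative for a reader who prefers not to chase the EGA references.
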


\begin{proof} 
Since $X$ is normal, 
the finite surjective morphism $f$ is universally open \cite[14.4.4]{EGAIV3}. 
As ${\rm char}(K)=0$,
\cite[15.5.2]{EGAIV3} therefore gives for $x\in X(K)$ that 
$$
 \sum_{z\in f^{-1}(x)} [K(z):K]\le n,
$$
in particular $f^{-1}(x)(L)=f^{-1}(x)(L_0)$. 
Thus $x\in f(Z(L))$ if and only if $x\in f(Z(L_0))$.
\end{proof}

\begin{proposition} \label{SM:main}
Let $X$ be a normal $K$-variety, $L/K$ a small extension
and $T\subseteq X(L)$. 
If $T$ is thin (resp.\ strongly thin) in $X_L$, then $T\cap X(K)$ is thin (resp.\ strongly thin) in $X$. 
\end{proposition}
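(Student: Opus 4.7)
The plan is to reduce to the two atomic cases of a thin (resp.\ strongly thin) set and dispatch each by combining a descent to a finite subextension with Lemma \ref{SM:part} and the already-established Lemma \ref{FG:fin}. Since a finite union of thin (resp.\ strongly thin) sets is of the same type, it suffices to treat $T=C(L)$ for $C\subseteq X_L$ a proper closed subscheme, and $T=f(Z(L))$ for a cover $f\colon Z\to X_L$ with $Z$ geometrically irreducible and $\deg(f)>1$ (resp.\ ramified). Using that $L$ is the filtered colimit of its finite subextensions and that finitely presented $X_L$-schemes descend (cf.\ the descent arguments of \cite[IV.8]{EGAIV3} used already in Lemma \ref{FG:pure}), both $C$ and $f$ come by base change from a proper closed subscheme $C_1\subseteq X_{L_1}$ and a cover $f_1\colon Z_1\to X_{L_1}$ of the same degree and ramification type, over some finite subextension $L_1/K$.

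In the closed subscheme case, since closed immersions are stable under base change, every $L$-point of $C$ that lies in $X(K)\subseteq X(L_1)$ comes from an $L_1$-point of $C_1$, so $C(L)\cap X(K)\subseteq C_1(L_1)\cap X(K)$, and Lemma \ref{FG:fin} applied to the finite extension $L_1/K$ shows this is thin in $X$. In the cover case, apply Lemma \ref{SM:part} to $f_1$ (of degree $n$) and the algebraic extension $L/L_1$ to obtain
\[
 f(Z(L))\cap X(K) \subseteq f_1(Z_1(L))\cap X_{L_1}(L_1) = f_1(Z_1(L_0))\cap X_{L_1}(L_1),
\]
where $L_0$ is the compositum of all subextensions of $L/L_1$ of degree at most $n$. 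Here one uses that smallness of $L/K$ descends to smallness of $L/L_1$ (a subextension of $L/L_1$ of degree $d$ is a subextension of $L/K$ of degree $d[L_1:K]$), so that only finitely many such subextensions exist and $L_0/L_1$, hence $L_0/K$, is finite. The base-changed cover $Z_1\times_{L_1}L_0\to X_{L_0}$ remains a cover of degree $n$ (resp.\ ramified), because normality is preserved under separable base change and ramification as well as degree are stable under field extension; hence $f_1(Z_1(L_0))$ is thin (resp.\ strongly thin) in $X_{L_0}$. A final application of Lemma \ref{FG:fin} to the finite extension $L_0/K$ concludes the cover case.

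The main obstacle is controlling $L_0$: the entire argument hinges on converting the possibly infinite algebraic extension $L/L_1$ into a finite one $L_0/K$, and this is exactly where the smallness hypothesis is used essentially via Lemma \ref{SM:part}. Once $L_0/K$ is finite, everything else is bookkeeping through descent and the finite-extension lemma.
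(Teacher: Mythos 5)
Your proof is correct and takes essentially the same approach as the paper: descend $C$ and $f$ to a finite subextension of $L/K$, invoke Lemma~\ref{SM:part} to replace the infinite extension $L$ by the finite extension $L_0$ generated by the subextensions of bounded degree (where smallness is used), and finish with Lemma~\ref{FG:fin}. The only cosmetic differences are that you treat the two atomic cases separately and apply Lemma~\ref{FG:fin} once to $L_0/K$ in the cover case, whereas the paper bundles both into $T'=T\cap X(K')$ and applies Lemma~\ref{FG:fin} twice, for $L_0/K'$ and then for $K'/K$; also, in your display you should intersect with $X(K)$ rather than $X_{L_1}(L_1)$ to feed the final application of Lemma~\ref{FG:fin}, and it is worth remarking explicitly (as the paper does) that $Z_1\times_{L_1}L_0$ is still irreducible because $Z_1$ is geometrically irreducible, so that $f_1$ base-changed to $L_0$ is indeed a cover.
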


\begin{proof}
It suffices to consider the case 
$T=C(L)\cup f(Z(L))$
for a proper closed subset $C$ of $X_L$ and $f\colon Z\to X_L$ 
a cover of degree $n>1$ (resp.\ a ramified cover).
Since $L/K$ is algebraic, there exists a finite extension $K'/K$ in $L$, a 
proper closed subset $C'$ of $X_{K'}$ and a 
cover $f'\colon Z'\to X_{K'}$ of degree $n$ (resp.\ a ramified cover $f'\colon Z'\to X_{K'}$) such that 
$C'_L=C$, $Z'_L=Z$ and $f'_L=f$.  
We will now show that 
$$
 T':=T\cap X(K')=C'(K')\cup (f'(Z'(L))\cap X(K'))
$$
is thin (resp.\ strongly thin) in $X_{K'}$,
which by Lemma \ref{FG:fin} will prove that 
$T\cap X(K)=T'\cap X(K)$ is thin (resp.\ strongly thin) in $X$.
Clearly 
$C'(K')$ is strongly thin in $X_{K'}$,
and Lemma \ref{SM:part} applied to $f'$ gives that
$$
  f'(Z'(L))\cap X(K')= f'(Z'(L_0))\cap X(K'),
$$
where $L_0$ is the compositum of all subextensions of $L/K'$ of degree at most $n$.
As $Z'_{L_0}$ is irreducible (because even $Z'_L$ is irreducible), and in particular $f'_{L_0}$ is a cover of degree $n$ (resp.\ a ramified cover, cf.\ \cite[IV.2.7.1]{EGAIV2}), we have that $f'(Z'(L_0))$ is thin (resp.\ strongly thin) in $X_{L_0}$. 
As the extension $L_0/K'$ is finite because
$L/K$ is small,
this implies that
$f'(Z'(L_0))\cap X(K')$ is thin (resp.\ strongly thin) in $X_{K'}$ by Lemma \ref{FG:fin}. 
Hence $T'$ is thin (resp.\ strongly thin) in $X_{K'}$. 
\end{proof}

\begin{corollary}\label{cor:small}
Let $X$ be a normal $K$-variety and $L/K$ a small extension.
If $X$ has HP (resp.\ WHP), then $X_L$ has HP (resp.\ WHP).     
\end{corollary}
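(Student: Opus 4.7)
The plan is to deduce this corollary directly from Proposition~\ref{SM:main} by contraposition, with essentially no additional work. The statement that $X_L$ has HP (resp.\ WHP) asserts that $X(L)$ is not thin (resp.\ not strongly thin) in $X_L$, so I would suppose for contradiction that $X(L)$ \emph{is} thin (resp.\ strongly thin) in $X_L$ and aim to derive a contradiction to HP (resp.\ WHP) of $X$.

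Concretely, I would apply Proposition~\ref{SM:main} with the choice $T := X(L)$. Its hypothesis is met since $T$ is by assumption thin (resp.\ strongly thin) in $X_L$ and $L/K$ is small. The conclusion then gives that $T \cap X(K)$ is thin (resp.\ strongly thin) in $X$. Since $X(K) \subseteq X(L)$, this intersection equals $X(K)$, contradicting the assumption that $X$ has HP (resp.\ WHP).

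I do not anticipate any real obstacle: the content of the corollary is entirely encoded in Proposition~\ref{SM:main}, and the corollary is simply the translation of the special case $T = X(L)$ into the language of HP and WHP. The only thing to be mindful of is keeping the ``thin/HP'' and ``strongly thin/WHP'' cases consistently paired throughout, and verifying that the normality hypothesis on $X$ (required both for Proposition~\ref{SM:main} and for the definitions of HP and WHP) is in force, which it is by assumption.
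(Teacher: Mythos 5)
Your proof is correct and is exactly the deduction the paper intends: the corollary is stated without a separate proof precisely because it is the contrapositive of Proposition~\ref{SM:main} applied with $T = X(L)$. The observation that $X(L) \cap X(K) = X(K)$ is the only thing to note, and you have handled it.
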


The case $X=\mathbb{P}^1$ and $L/K$ Galois of
this corollary appears already as \cite[Prop.~16.11.1]{FJ}.
The proof, however, uses irreducible specializations and therefore 
does not carry over from HP to WHP,
which is why the introduction of \cite{BFP24}
stated (in the special case of Galois extensions with finitely generated Galois group) 
that at that point we did not know how to prove it for WHP.

\begin{example}
For example, if $L$ denotes the cyclotomic $\mathbb{Z}_p$-extension of $\mathbb{Q}$,
and $E$ is an elliptic curve over $\mathbb{Q}$ of positive rank,
then $E$ has WHP
(as follows from Falting's theorem, see \cite{CZ}), 
and thus by Corollary \ref{cor:small}
so has $E_L$.
By \cite{Kato} (see also \cite[Example 1.2]{MazurRubin}), $E(L)$ is finitely generated.
This is in stark contrast with the results from \cite{BFP24},
where, 
for abelian extensions $M/\mathbb{Q}$, 
in order to prove that
$E_M$ has WHP
we needed to assume
that $E(M)$ has infinite rank,
in fact even that $E(M)/E(K)$ has positive rank for every number field $K\subseteq M$.
In particular, the results of \cite{BFP24} do not give that $E_L$ has WHP.
\end{example}

\section{Counterexamples}

\noindent
It is obvious that neither HP nor WHP is preserved in unions of chains of fields.
In this final section we discuss two other potential preservation theorems for these two properties and their negation.
Both constructions will make use of the following result for fields $K$ that are 
Hilbertian (i.e.~$\mathbb{P}^1_K$ has HP) and
pseudo-algebraically closed (PAC, see e.g.~\cite[Chapter 11]{FJ}):

\begin{proposition}\label{prop:hPAC}
Let $K$ be a Hilbertian PAC field.
Then every normal $K$-variety $X$ has HP.
\end{proposition}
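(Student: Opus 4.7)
We may assume $X$ is geometrically irreducible: otherwise a Galois-descent argument places $X(K)$ in the (proper) intersection of the geometric components, so $X(K)$ is thin and $X$ cannot have HP. If $\dim X=0$ then $X=\Spec K$ and the statement is trivial, so I assume $\dim X\geq 1$. By PAC, $X(K)$ is Zariski dense in $X$. Given a proper closed $C\subset X$ and covers $f_i\colon Z_i\to X$ ($i=1,\dots,r$) of degree $>1$ with $Z_i$ geometrically irreducible, I want to produce $x\in X(K)$ outside $C\cup\bigcup_i f_i(Z_i(K))$. After replacing $X$ by an affine open contained in $X\setminus C$ and in the \'etale locus of every $f_i$, I may assume $X$ is smooth affine, $C=\emptyset$, and each $f_i$ is \'etale.

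My plan is a Bertini-type reduction to a curve, followed by a descent to $\mathbb{P}^1$ where Hilbertianity applies. Pick $x_0\in X(K)$ (by PAC), embed $X$ in some $\mathbb{P}^N$, and parameterize by a rational $K$-variety $T$ the curves in $X$ obtained as intersections with generic linear subspaces of complementary codimension through $x_0$. A Bertini-type irreducibility theorem (valid over any infinite field, e.g.\ Jouanolou) guarantees that for $t$ in a nonempty Zariski-open $T^\circ\subseteq T$ the curve $D_t\subseteq X$ is smooth geometrically irreducible and each pullback $Z_i\times_X D_t\to D_t$ is geometrically irreducible of degree $\deg f_i>1$. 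Since $T(K)$ is Zariski dense ($T$ rational over the infinite field $K$), I pick $t\in T^\circ(K)$; writing $D=D_t$ and $g_i\colon W_i\to D$ for the pullbacks, the problem becomes HP for the curve $D$ with respect to the covers $g_i$, i.e.\ to find $x\in D(K)\setminus\bigcup_i g_i(W_i(K))$.

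For this, I choose a non-constant $K$-rational function on $D$ giving a finite dominant morphism $\pi\colon D\to\mathbb{P}^1$, and set $h_i:=\pi\circ g_i$. These are covers of $\mathbb{P}^1$ of degree $>1$ with $W_i$ geometrically irreducible, so by Hilbertianity the set $\mathcal{H}:=\mathbb{P}^1(K)\setminus\bigcup_i h_i(W_i(K))$ is not thin in $\mathbb{P}^1$. Any $x\in D(K)$ with $\pi(x)\in\mathcal{H}$ then automatically satisfies $x\notin\bigcup_i g_i(W_i(K))$, since $x=g_i(w)$ would force $\pi(x)=h_i(w)\in h_i(W_i(K))$. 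The \emph{main obstacle} is therefore to guarantee that $\pi(D(K))$ meets $\mathcal{H}$; equivalently, that $\pi(D(K))$ is not contained in the specific thin set $\bigcup_i h_i(W_i(K))$. My plan for this is a twist-and-descent argument: form the Galois closure $\tilde{D}\to\mathbb{P}^1$ of the compound cover $\coprod_i W_i\to\mathbb{P}^1$ with Galois group $G$, locate (using that a finite group is not the union of conjugates of a single proper subgroup, together with the finiteness of the collection of subgroups that correspond to the $W_i$) a Frobenius behavior at $K$-points of $D$ incompatible with lifting to any $W_i$, and then use PAC applied to a geometrically irreducible twist of $\tilde{D}$ realizing that Frobenius class to produce the desired $K$-point of $D$ over a point of $\mathcal{H}$. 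This Chebotarev-type combination of Hilbertianity (to spread out Frobenius conjugacy classes at $K$-points of $\mathbb{P}^1$) with PAC (providing rational points on geometrically irreducible twists) is the classical mechanism --- see e.g.\ Fried--Jarden, \emph{Field Arithmetic} --- and is the central technical ingredient of the proof.
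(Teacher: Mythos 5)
Your approach is genuinely different from the paper's: the paper simply invokes the theorem of Fried--Völklein that a (countable) Hilbertian PAC field is $\omega$-free, and then cites \cite[Prop.~27.3.4]{FJ}, which says that over an $\omega$-free PAC field every variety has HP. You instead try a hands-on argument: Bertini reduction of the given covers $f_i\colon Z_i\to X$ to a curve $D\subseteq X$, then a further map $\pi\colon D\to \Pp^1$, then Hilbertianity plus a twist-and-descent from $\Pp^1$ back to $D$.

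The first half of your argument (geometric irreducibility reduction, shrinking to the \'etale locus, Bertini to a geometrically irreducible curve $D$ with the pullbacks $W_i\to D$ still geometrically irreducible of degree $>1$) is sound and standard. The gap is in the final twist-and-descent step, and it is a real one, not a matter of detail. What you actually need is a point $a\in\Pp^1(K)$ whose decomposition group $G_a$ inside $G=\Gal(\tilde D/\Pp^1)$ is subconjugate to $H_D:=\Gal(\tilde D/D)$ but not to any $H_i:=\Gal(\tilde D/W_i)\subsetneq H_D$. Your sketch proposes to find a single ``Frobenius'' element $\sigma$ conjugate into $H_D$ but into no $H_i$, invoking the lemma that a finite group is not the union of conjugates of a single proper subgroup. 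That lemma is for one proper subgroup; with several $H_i$ it can fail: take $G=S_4$, $H_D=S_3$ (a point stabilizer), $H_1=\langle(12)\rangle$, $H_2=\langle(123)\rangle$. Every element of $G$ conjugate into $H_D$ (identity, transpositions, $3$-cycles) is already conjugate into $H_1$ or $H_2$, so no such $\sigma$ exists even though each $H_i\subsetneq H_D$. The non-cyclic variant, aiming for $G_a$ conjugate to $H_D$ itself, does avoid the combinatorial obstruction, but realizing it amounts to finding a $K$-point of $D$ over which $\tilde D\to D$ stays irreducible, which is precisely an HP-type statement for $D$ and thus circular. Put differently, to run the PAC-twist you need a continuous homomorphism $\phi\colon G_K\to G$ with image (sub)conjugate to $H_D$ and not to any $H_i$; Hilbertianity of $K$ applied to the regular cover $\tilde D\to\Pp^1$ only gives surjections $G_K\twoheadrightarrow G$, which lands you back outside $\pi(D(K))$, and producing $\phi$ with image $H_D$ is exactly the sort of ``every finite embedding problem over $K$ has a solution'' statement that $\omega$-freeness encodes. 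So your route, pushed to completion, must either invoke the Fried--Völklein theorem (returning to the paper's proof) or supply a substitute for it, which your sketch does not do.
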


\begin{proof}
By a theorem of Fried--Völklein \cite{FV},
the absolute Galois group of a countable Hilbertian PAC field is a free profinite group on countably many generators,
in particular $K$ is what is called $\omega$-free,
and \cite[Prop.~27.3.4]{FJ} gives
that over an $\omega$-free PAC field every variety has HP.
\end{proof}

Also the following Proposition \ref{WHPimpliesHilbertian},
which is a strengthening of
\cite[p.~20 Exercise 1]{Serre},
will be applied to both our examples:

\begin{lemma}\label{Aut1} 
Let $B,C\subseteq\Pp^1(\oK)$ be finite sets. There exists $\tau\in{\rm Aut}(\Pp^1_\Qq)$ with 
$\tau(B)\cap C=\emptyset$. 
\end{lemma}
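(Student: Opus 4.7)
The plan is to parametrize $\Aut(\Pp^1_\Qq) = \mathrm{PGL}_2(\Qq)$ as the complement of the quadric $V(ad-bc)$ inside $\Pp^3_\Qq$, via $\tau = \begin{pmatrix}a & b \\ c & d\end{pmatrix} \mapsto [a:b:c:d]$, and then to exhibit a $\Qq$-rational point of $\Pp^3$ that simultaneously lies off this quadric and off each hyperplane expressing a forbidden equation $\tau(\beta) = \gamma$. For fixed $\beta = (\beta_1:\beta_2) \in B$ and $\gamma = (\gamma_1:\gamma_2) \in C$, the equation $\tau(\beta)=\gamma$ amounts to the single linear constraint $L_{\beta,\gamma}(a,b,c,d) := \gamma_2(a\beta_1 + b\beta_2) - \gamma_1(c\beta_1 + d\beta_2) = 0$, which, since $(\beta_1,\beta_2)\ne(0,0)$ and $(\gamma_1,\gamma_2)\ne(0,0)$, is a nonzero linear form in $\oK[a,b,c,d]$. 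This projective bookkeeping has the pleasant side effect of automatically handling the case in which $\infty$ lies in $B$ or $C$.

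Next I would consider the product $F = (ad-bc)\prod_{(\beta,\gamma)\in B\times C} L_{\beta,\gamma} \in \oK[a,b,c,d]$, which is nonzero because $\oK[a,b,c,d]$ is an integral domain and each factor is nonzero. The crux is then a standard non-vanishing lemma: a nonzero polynomial in $\oK[x_1,\ldots,x_n]$ cannot vanish identically on $\Qq^n$. To prove this I would write $F = \sum_j e_j F_j$, where $(e_j)$ is a $\Qq$-basis of the finite-dimensional $\Qq$-subspace of $\oK$ spanned by the coefficients of $F$, and $F_j \in \Qq[x_1,\ldots,x_n]$; vanishing of $F$ on $\Qq^n$ would, by $\Qq$-linear independence of the $e_j$, force every $F_j$ to vanish on $\Qq^n$, and hence (since $\Qq$ is infinite) to be zero, contradicting $F\neq 0$.

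Picking $(a_0,b_0,c_0,d_0)\in\Qq^4$ with $F(a_0,b_0,c_0,d_0)\neq 0$ gives $a_0 d_0 - b_0 c_0 \neq 0$, so $\tau := [a_0:b_0:c_0:d_0]$ is a genuine element of $\mathrm{PGL}_2(\Qq) = \Aut(\Pp^1_\Qq)$, and $L_{\beta,\gamma}(a_0,b_0,c_0,d_0)\neq 0$ for every pair translates to $\tau(\beta)\neq\gamma$, i.e.\ $\tau(B)\cap C = \emptyset$. I do not foresee any serious obstacle; the whole argument reduces to the Zariski density of $\Qq^n$ in $\Aa^n_{\oK}$, and the only conceptual step requiring explicit verification is the non-vanishing lemma, which is the mild ``main obstacle'' but is settled in a few lines by the basis argument above.
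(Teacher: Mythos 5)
Your proof is correct but takes a genuinely different route from the paper's. The paper's argument is a one-liner: after reducing (by pre- and post-composing with suitable elements of $\Aut(\Pp^1_\Qq)$) to the case $B,C\subseteq\Aa^1(\oK)$, it simply takes $\tau$ to be translation by a rational number avoiding the finite set of differences of elements of $B$ and $C$. You instead work globally in $\mathrm{PGL}_2(\Qq)\subseteq\Pp^3_\Qq$: you encode each forbidden coincidence $\tau(\beta)=\gamma$ as the vanishing of a nonzero linear form $L_{\beta,\gamma}$, multiply these together with the determinant form, and invoke the fact (proved cleanly by your basis-expansion argument) that a nonzero polynomial over $\oK$ cannot vanish identically on $\Qq^n$. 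Your proof is longer, but it makes the underlying geometry explicit --- the bad automorphisms form a proper closed subvariety of $\mathrm{PGL}_2$ and $\Qq$-rational points are Zariski dense --- and, as you note, it treats the point at infinity on the same footing as all others, dispensing with the initial normalization. Both arguments are sound; the paper's is more economical, yours is arguably more robust and transparent.
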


\begin{proof}  
Without loss of generality, $B,C\subseteq\mathbb{A}^1(\oK)$.
Choose $a\in\mathbb{Q}\setminus(B-C)$ and let $\tau$ be the translation by $a$.
\end{proof}

\begin{lemma} \label{thinpullback} 
Let $g\colon X\to Y$ be a smooth surjective morphism
of normal $K$-varieties,
and let be $T\subseteq Y(K)$ strongly thin in $Y$. 
If
\begin{enumerate}[ (a)]
\item  the generic fiber of $g$ is geometrically irreducible, \underline{or}
\item $g$ is proper,
\end{enumerate}
then $\bar{T}:=\{x\in X(K): g(x)\in T\}$
is strongly thin in $X$.
\end{lemma}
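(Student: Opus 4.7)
The plan is to pull back the data witnessing strong thinness of $T$ along $g$. Write $T\subseteq C(K)\cup\bigcup_{i=1}^{r}f_{i}(Z_{i}(K))$ with $C\subsetneq Y$ a proper closed subscheme and $f_{i}\colon Z_{i}\to Y$ ramified covers. For each $i$, I form the cartesian square
$$
\xymatrix{
Z_{i}'\ar[r]^{\pi_{i}}\ar[d]_{f_{i}'} & Z_{i}\ar[d]^{f_{i}}\\
X\ar[r]^{g} & Y
}
$$
Then $\pi_{i}$ inherits smoothness from $g$ (and, under (b), properness), while $f_{i}'$ is finite and surjective by standard base-change properties. Smoothness of $\pi_{i}$ together with normality of $Z_{i}$ makes $Z_{i}'$ normal, and the identity $\Omega_{Z_{i}'/X}=\pi_{i}^{*}\Omega_{Z_{i}/Y}$ for relative differentials shows that the ramification locus of $f_{i}'$ is $R_{i}':=\pi_{i}^{-1}(R_{i})$, where $R_{i}\subseteq Z_{i}$ is the nonempty ramification locus of $f_{i}$. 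Since $g^{-1}(C)$ is a proper closed subset of $X$ and every $K$-point of $X$ whose image under $g$ lies in $f_{i}(Z_{i}(K))$ lifts to a $K$-point of $Z_{i}'$, one has $\bar{T}\subseteq g^{-1}(C)(K)\cup\bigcup_{i}f_{i}'(Z_{i}'(K))$, so it suffices to show each $f_{i}'(Z_{i}'(K))$ is strongly thin in $X$.

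Under hypothesis (a), the generic fiber $X_{K(Z_{i})}$ of $\pi_{i}$ is irreducible because the generic fiber of $g$ is geometrically irreducible, hence stays irreducible after base change to $K(Z_{i})$. By flatness of $\pi_{i}$, every irreducible component of $Z_{i}'$ dominates $Z_{i}$, and since the generic point of the generic fiber is unique, $Z_{i}'$ is irreducible. Combined with normality, $Z_{i}'$ is a $K$-variety and $f_{i}'\colon Z_{i}'\to X$ is a ramified cover, so $f_{i}'(Z_{i}'(K))$ is strongly thin directly.

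Under hypothesis (b), $\pi_{i}$ is both proper and smooth, and the connected components $W_{1},\dots,W_{m}$ of the normal scheme $Z_{i}'$ coincide with its irreducible components and are clopen, each a normal $K$-variety. The restriction $\pi_{i}|_{W_{j}}$ is proper (restriction to a closed subscheme) and smooth (restriction to an open subscheme), hence flat with image that is simultaneously closed, open, and nonempty in the irreducible $Z_{i}$; this forces $\pi_{i}(W_{j})=Z_{i}$. The surjectivity yields $W_{j}\cap R_{i}'\ne\emptyset$, so the finite morphism $h_{j}:=f_{i}'|_{W_{j}}\colon W_{j}\to X$ has nonempty ramification locus. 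If $h_{j}$ is surjective, it is a ramified cover of $X$; otherwise the closed image $h_{j}(W_{j})$ is a proper closed subset of $X$ that may be absorbed into the closed part of the description of $\bar{T}$. Collecting the contributions produces a witness to strong thinness of $\bar{T}$.

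The main obstacle is case (b), where $Z_{i}'$ may genuinely split into several components and one has to exclude the bad scenario of an étale component surjecting onto $X$, which would only give a thin, not strongly thin, contribution. Properness of $\pi_{i}$ is the key input preventing this: it forces every component of $Z_{i}'$ to surject onto $Z_{i}$, and therefore to meet the pulled-back ramification divisor $R_{i}'$. In case (a), geometric irreducibility of the generic fiber of $g$ circumvents the issue entirely by making $Z_{i}'$ itself irreducible.
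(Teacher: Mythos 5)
Your proof is correct and follows the same overall strategy as the paper's: reduce to a single ramified cover $f$, form the fiber product $Z\times_Y X$, observe it is normal because $Z$ is normal and the projection to $Z$ is smooth, and then split into cases (a) and (b). In case (a) your irreducibility argument (flatness of $\pi_i$ forces every component to dominate $Z_i$, and there is a unique generic point of the generic fiber) matches the paper's argument that the generic fiber over $\zeta$ dominates $Z\times_Y X$. In case (b) your observation that each clopen component $W_j$ is both open and closed over the connected $Z_i$, hence surjects onto it, is exactly the paper's argument that $g'_j$ is surjective.

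Where you diverge from the paper is the final step of case (b). You conclude ramification of $h_j$ by pulling back the ramification locus via $\Omega_{Z_i'/X}\cong\pi_i^*\Omega_{Z_i/Y}$ and showing $W_j$ meets $\pi_i^{-1}(R_i)$. The paper instead never identifies the ramification locus; it argues that $g'_j$ is smooth surjective (hence fppf), that $f$ is not smooth, so by descent of smoothness $f\circ g'_j=g\circ f'_j$ is not smooth, and since $g$ is smooth, $f'_j$ is not smooth, hence not étale. Both work. Your version relies on the implicit fact that for a cover of normal $K$-varieties in characteristic $0$, ``ramified'' (i.e.\ not étale) is equivalent to $\Omega_{Z/Y}\neq 0$; this is true (unramified plus finite plus dominant onto a normal base forces flatness, hence étale), but you use it silently when passing from ``$f_i$ is ramified'' to ``$R_i\neq\emptyset$''. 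The paper's smoothness-descent argument sidesteps this point, which is a small advantage in robustness, though your differentials computation is arguably more direct. The remaining cosmetic difference is that you treat the whole union $C(K)\cup\bigcup_i f_i(Z_i(K))$ at once while the paper reduces to one term at a time.
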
 

\begin{proof} 
If $T=C(K)$ for some proper closed subset $C$ of $Y$, then $\bar{C}:=g^{-1}(C)$ is a proper closed subset of $X$ containing $\bar{T}$, thus $\bar{T}$ is strongly thin. 
It hence suffices to consider the case where $T=f(Z(K))$ for a ramified cover $f\colon Z\to Y$ with $Z$ geometrically irreducible.
In the cartesian square
$$
 \xymatrix{Z\times_Y X \ar[r]^-{g'}\ar[d]^{f'} & Z\ar[d]^{f}\\ 
X\ar[r]^{g} & Y}
$$
the fiber product $Z\times_Y X$ is normal because $g'$ is smooth and $Z$ is normal. 
As $\bar{T}$ is contained in $f'((Z\times_Y X)(K))$, it suffices to prove that the latter set is strongly thin in $X$ in both cases (a) and (b). 

First assume that the generic fiber $g^{-1}(\eta)$ of $g$ is geometrically irreducible.
Then if $\zeta$ denotes the generic point of $Z$,
$(g')^{-1}(\zeta)=g^{-1}(\eta)\times_{k(\eta)} k(\zeta)$ is irreducible.
Thus as
$(g')^{-1}(\zeta)\to Z\times_Y X$ is dominant, because it is the pullback of the dominant map $\Spec(k(\zeta))\to Z$ by the surjective morphism $g'$, we get that $Z\times_Y X$ is irreducible and thus $f'$ is a cover. Since $f$ is ramified and $g$ is surjective, it follows that $f'$ is ramified \cite[17.7.3(i)]{EGAIV4}.
Thus $f'((Z\times_Y X)(K))$ is strongly thin in $X$. 

Now assume instead that $g$ is proper. 
Let $Z_1,\dots, Z_r$ be the irreducible components of $Z\times_Y X$, and denote by
$f_j'\colon Z_j\to X$ (resp.\ $g_j'\colon Z_j\to Z$) 
the restrictions of $f'$ (resp.\ $g'$) to $Z_j$. 
Each $Z_j$ is open in $Z\times_Y X$ because $Z\times_Y X$ is normal. 
We have to show that $\bar{T}_j:=f_j'(Z_j(K))$ is strongly thin in $X$ for every $j$. 
As $f_j'$ is finite, hence closed,
we can assume that $f_j'$ is surjective,
and therefore a cover.
It is enough to show that $f_j'$ is ramified.
Since $g'$ is smooth and proper, hence open and closed, and $Z$ is connected, $g_j'\colon Z_j\to Z$ is surjective and smooth. 
In particular $g_j'$ is flat (cf.\ \cite[17.5.1(i)]{EGAIV4}).
As $f$ is not smooth it follows that $f\circ g_j'$ is not smooth
(cf.\ \cite[17.7.7(i)]{EGAIV4}).
Hence $g\circ f_j'$ is not smooth and $f_j'$ cannot be \'etale, as desired.
\end{proof}

\begin{remark}
In the special case where $X$ and $Y$ are smooth and proper over $K$, and $T=Y(K)$, the previous lemma is implied by \cite[Thm.~3.7]{CDJLZ}; we need it in the non-proper case however. 
By the proof of \cite[Prop.~7.13]{CTS}, Lemma~\ref{thinpullback}(a)
holds for ``thin'' instead of ``strongly thin''. Lemma \ref{thinpullback}(b) though does not hold for ``thin'' instead of ``strongly thin'':  
The set $T=\Qq^{\times 2}$ is thin in $X:=\mathbb{G}_{m,\Qq}$,  the morphism $g\colon X\to X, x\mapsto x^2$ is smooth, proper and surjective, but $\{x\in X(\Qq):
g(x)\in T\}=\Qq^\times$ is not thin in $X$. 
\end{remark}

\begin{proposition}\label{WHPimpliesHilbertian}
If some normal $K$-variety $X$ has WHP,
then $K$ is Hilbertian.
\end{proposition}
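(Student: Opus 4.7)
The plan is to prove the contrapositive: assuming $K$ is not Hilbertian, I will show that $X(K)$ is strongly thin in $X$, contradicting WHP. Suppose $\mathbb{P}^1(K)$ is thin in $\mathbb{P}^1$. Since $\mathbb{P}^1$ admits no non-trivial connected étale cover in characteristic zero, this cover is in fact strongly thin, so
$$
\mathbb{P}^1(K) \subseteq C_0(K) \cup \bigcup_{i=1}^r f_i(Z_i(K))
$$
for some proper closed $C_0 \subsetneq \mathbb{P}^1$ and ramified covers $f_i\colon Z_i \to \mathbb{P}^1$ with $Z_i$ geometrically irreducible. Let $B \subseteq \mathbb{P}^1(\overline{K})$ be the finite union of branch loci of the $f_i$.

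The central step is to produce a smooth surjective morphism $g\colon U \to V$ with $U \subseteq X$ a non-empty open, $V \subseteq \mathbb{P}^1$ an open containing $B$, and $g$ satisfying one of the hypotheses of Lemma~\ref{thinpullback}. I will build $g = \tau \circ \phi_0$ where $\phi_0$ is an auxiliary morphism and $\tau \in \Aut(\mathbb{P}^1_{\mathbb{Q}})$ comes from Lemma~\ref{Aut1}. If $\dim X \geq 2$, take $\phi_0$ to be a generic $K$-linear projection from an affine open of $X$ (embedded in $\mathbb{A}^n$) to $\mathbb{A}^1$; by Bertini's irreducibility theorem, $\phi_0$ has geometrically irreducible generic fiber, and after shrinking is smooth surjective onto an open $V_0 \subseteq \mathbb{A}^1$. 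If $\dim X = 1$, then $X$ is smooth by normality, and any non-constant $\phi_0 \in K(X)$ defines a finite morphism from the smooth projective completion $\overline{X} \supseteq X$ to $\mathbb{P}^1$, which restricts over $V_0 := \mathbb{P}^1 \setminus (\mathrm{Branch}(\phi_0) \cup \phi_0(\overline{X} \setminus X))$ to a finite étale morphism $\phi_0\colon U \to V_0$ with $U \subseteq X$. In either case, applying Lemma~\ref{Aut1} to $S_0 := \mathbb{P}^1 \setminus V_0$ and $B$ yields $\tau$ with $\tau(S_0) \cap B = \emptyset$, so setting $g := \tau \circ \phi_0$ and $V := \mathbb{P}^1 \setminus \tau(S_0)$ produces a smooth surjection $g\colon U \to V$ with $V \supseteq B$, inheriting from $\phi_0$ either a geometrically irreducible generic fiber or finiteness (hence properness).

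Since $V$ contains a branch point of each $f_i$, each restricted cover $f_i|_{f_i^{-1}(V)}\colon f_i^{-1}(V) \to V$ remains ramified, and intersecting the cover of $\mathbb{P}^1(K)$ with $V(K)$ exhibits $V(K)$ as strongly thin in $V$. By Lemma~\ref{thinpullback}, $U(K) = g^{-1}(V(K))$ is strongly thin in $U$. Finally, any strongly thin subset of the open $U \subseteq X$ is strongly thin in $X$: the closed part is extended by Zariski closure, and each ramified cover $h\colon W \to U$ extends to a ramified cover $\overline{W} \to X$ via the normalization of $X$ inside the function field $K(W)$. Absorbing $(X \setminus U)(K)$ by the proper closed subset $X \setminus U$ then gives that $X(K)$ is strongly thin in $X$, a contradiction. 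The principal technical hurdle is the Bertini step for $\dim X \geq 2$ guaranteeing geometrically irreducible generic fiber; the extension of ramified covers from $U$ to $X$ via normalization in the function field is a secondary but non-trivial point that needs to be spelled out.
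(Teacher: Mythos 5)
Your proof is correct in substance and follows the same overall template as the paper's: reduce to a strongly thin subset of $\mathbb{P}^1_K$ (using Riemann--Hurwitz so that degree~$>1$ covers of $\mathbb{P}^1$ are automatically ramified), build a dominant rational map from $X$ to $\mathbb{P}^1$ whose image can, via Lemma~\ref{Aut1}, be arranged to contain the branch points, and then pull back along it using Lemma~\ref{thinpullback}. Where you diverge is in \emph{how} the map $g$ is constructed. You split into cases: for $\dim X\geq 2$ you take a generic linear projection $U\to\mathbb{A}^1$ and invoke a Bertini-type theorem to get geometric irreducibility of the generic fiber (case~(a) of Lemma~\ref{thinpullback}); for $\dim X=1$ you take any non-constant function and use the smooth projective model to produce a finite \'etale map (case~(b)). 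The paper instead constructs $g$ uniformly in all dimensions as $\tau\circ q\circ p$, where $p\colon X'\to V\subseteq\mathbb{A}^n$ is a finite \'etale cover (which exists by generic smoothness and Noether normalization in characteristic~$0$) and $q$ is the first-coordinate projection, whose generic fiber is an open subset of $\mathbb{A}^{n-1}_{K(t)}$ and hence trivially geometrically irreducible; Lemma~\ref{thinpullback} is then applied twice, in case~(a) to $\tau\circ q$ and in case~(b) to $p$. This buys two things: there is no case distinction on $\dim X$, and one entirely avoids the Bertini irreducibility step, which you yourself flag as the ``principal technical hurdle'' (it is true, but the statement that a \emph{generic $K$-linear pencil} has geometrically irreducible generic fiber requires the universal hyperplane section theorem plus a constructibility/specialization argument, so it is not a one-liner). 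Your argument also needs the explicit observation that if $X$ is not geometrically irreducible then $X(K)=\emptyset$, to justify applying Bertini; the paper sidesteps this by noting up front that WHP forces geometric irreducibility. The step you call ``secondary but non-trivial'' -- extending a strongly thin set from an open $U\subseteq X$ to $X$ by normalizing in the function field -- is correct and is also used implicitly by the paper.
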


\begin{proof} 
Let $(f_j\colon Z_j\to \Pp^1_K)_{j=1,\dots, r}$ be a family of covers with $\deg(f_j)>1$ and $Z_j$ geometrically irreducible for every $j$. Let $T=\bigcup_{j=1}^r f_j(Z_j(K))$,
and let $U$ be a nonempty open subset of $\Pp^1_K$. 
We have to show that $U(K)\setminus T$ is not empty. 
By the Hurwitz formula, 
$f_j$ is ramified for every $j$. 
In particular, $T$ is strongly thin in $\mathbb{P}^1_K$. 

As ${\rm char}(K)=0$, there exists a nonempty open subset $X'$ of $X$ and an \'etale cover $p\colon X'\to V$ where $V$ is some nonempty open subscheme of $\Aa^n_K$, $n=\dim(X)$. 
Let $q\colon V\to \Aa_K^1\subseteq \Pp^1_K$ be the projection on the first coordinate followed by the inclusion. 
Then $(q\circ p)(X')$ is open in $\Pp^1$. 
By Lemma \ref{Aut1} there exists an automorphism $\tau$ of $\Pp^1_K$ such that $\tau(q(p(X')))$ contains 
the finitely many branch points of each $f_j$. 
Let $g=\tau\circ q\circ p$. 
Note that $\tau\circ q$ is smooth and has geometrically connected fibers, and 
recall that $p$ is an \'etale cover, in particular smooth and proper. By Lemma~\ref{thinpullback} we conclude that $\bar{T}:=\{x\in X'(K): g(x)\in T\}$ is strongly thin in $X'$. It follows that $\bar{T}$ is strongly thin in $X$. 
Since $X$ has WHP,
$g^{-1}(U)(K)\setminus \bar{T}$ contains a point $x_0$,
and $g(x_0)\in U(K)\setminus T$ as desired.  
\end{proof}

Our first example shows that HP and WHP do not go {\em down} finite extensions.
More precisely, ``not HP'' and ``not WHP'' are not preserved under base change in finite extensions $L/K$, 
in a strong sense:

\begin{proposition}\label{prop:exQtr}
Let $K=\Qq^{\rm tr}$ be the maximal totally real Galois extension of $\Qq$ and let $L=\Qq^{\rm tr}(i)$. 
Then every normal $L$-variety $X$ has HP,
but no normal $K$-variety has WHP.
\end{proposition}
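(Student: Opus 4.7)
The plan is to handle the two assertions separately; both rest on classical properties of $\Qq^{\rm tr}$.

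For the first assertion I would apply Proposition \ref{prop:hPAC} to $L=\Qq^{\rm tr}(i)$, reducing the statement to the two inputs that $L$ is Hilbertian and that $L$ is PAC. Hilbertianness of $L$ follows from Weissauer's theorem (see \cite{FJ}): $\Qq$ is Hilbertian, $\Qq^{\rm tr}/\Qq$ is an infinite Galois extension, and $L/\Qq^{\rm tr}$ is a proper finite separable extension. PAC-ness of $L$ is a classical result, reflecting that $\Qq^{\rm tr}$ is a PRC field whose quadratic ``imaginary'' extension $\Qq^{\rm tr}(i)$ is known to be PAC (again in \cite{FJ}). With both hypotheses in hand, Proposition \ref{prop:hPAC} immediately yields that every normal $L$-variety has HP.

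For the second assertion, by the contrapositive of Proposition \ref{WHPimpliesHilbertian} it suffices to show that $K=\Qq^{\rm tr}$ is not Hilbertian. The key elementary observation is that $K$ is \emph{Pythagorean}, i.e.\ every sum of two squares in $K$ is again a square in $K$: for $a,b\in K$, the element $c=a^2+b^2$ is totally real and all its Galois conjugates over $\Qq$ are non-negative real numbers, so all conjugates of $\sqrt{c}$ are real, and hence $\sqrt{c}\in\Qq^{\rm tr}=K$. Given this, consider $f(T,X)=X^2-(1+T^2)\in K(T)[X]$. Since $-1$ is not a square in the totally real field $K$, $1+T^2$ is irreducible in the UFD $K[T]$, and a short unique-factorization argument shows it is not a square in $K(T)$, so $f$ is irreducible over $K(T)$. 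On the other hand, for every $t_0\in K$ the value $1+t_0^2=1^2+t_0^2$ is a sum of two squares and hence a square in $K$ by Pythagoreanness, so $f(t_0,X)$ factors over $K$. Thus $f$ has no irreducible specialization, $K$ fails HP for $\Pp^1_K$, and Proposition \ref{WHPimpliesHilbertian} rules out WHP for every normal $K$-variety.

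The principal obstacle is the non-elementary input that $L=\Qq^{\rm tr}(i)$ is PAC; I expect to rely on a citation rather than a self-contained argument here. Everything else reduces to Weissauer's theorem together with the short Pythagorean computation, which uses only that $\Qq^{\rm tr}$ consists of the totally real algebraic numbers.
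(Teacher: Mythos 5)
Your proof is correct and follows the same two-pronged structure as the paper: apply Proposition \ref{prop:hPAC} for the first claim (citing Weissauer for Hilbertianness and the Moret-Bailly/Green--Pop--Roquette PRC--PAC results for PAC-ness of $\Qq^{\rm tr}(i)$), and the contrapositive of Proposition \ref{WHPimpliesHilbertian} for the second. The only difference is that where the paper cites \cite[Thm.~1]{BF} for the non-Hilbertianness of $\Qq^{\rm tr}$, you supply the classical self-contained argument: $\Qq^{\rm tr}$ is Pythagorean (the conjugates of $a^2+b^2$ are all non-negative reals, so $\sqrt{a^2+b^2}$ is totally real), hence $X^2-(1+T^2)$ is irreducible over $K(T)$ but reducible under every specialization $T\mapsto t_0\in K$; this is a pleasant elaboration but not a different route.
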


\begin{proof} 
The first claim follows
from Proposition \ref{prop:hPAC}
since $L$ is Hilbertian 
by a theorem of Weissauer, 
and PAC by results of Moret-Bailly and Green--Pop--Roquette,
see \cite[Example 5.10.7]{patching}.
The second claim follows from 
Proposition \ref{WHPimpliesHilbertian}
since $K$ is not Hilbertian,
see e.g.~\cite[Thm.~1]{BF}.
\end{proof}

Our second example is motivated by attempts to prove a general product theorem for WHP 
(see discussion before Question \ref{q:generic})
at least over small extensions $L$ of $\mathbb{Q}$:
If $X,Y$ are $L$-varieties with WHP,
one could try to descend $X$ and $Y$ to varieties with WHP over a number field $K\subseteq L$, apply the product theorem of \cite{CDJLZ} there, and then use Corollary \ref{cor:small}
for the small extension $L/K$ to conclude that 
$X\times_L Y$ has WHP. 
The following Proposition \ref{prop:smallex} shows that this strategy fails.

\begin{lemma}\label{lem:smallPAC}
    Let $K$ be a countable Hilbertian field. 
    There exists a PAC field that is a small extension of $K$.
\end{lemma}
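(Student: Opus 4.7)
The plan is to construct $L$ as a Galois PAC extension of $K$ with a topologically finitely generated Galois group, so that by the discussion at the start of Section 4 the extension $L/K$ is small (finitely generated profinite groups being small, and subextensions of $L/K$ of degree $n$ corresponding bijectively to open subgroups of $\Gal(L/K)$ of index $n$).

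Using the countability of $K$, I enumerate all pairs $(M_j,V_j)_{j\ge 1}$ with $M_j$ a finite Galois subextension of $K_s/K$ and $V_j$ an absolutely irreducible $M_j$-variety. Fixing $e\ge 2$, I inductively build a tower $K=L_0\subseteq L_1\subseteq\cdots\subseteq K_s$ of finite Galois subextensions together with a compatible family of surjections $\varphi_n\colon\hat{F}_e\twoheadrightarrow\Gal(L_n/K)$ onto the free profinite group $\hat{F}_e$ of rank $e$, arranging at each step $n$ that if $M_n\subseteq L_{n-1}$ then $V_n$ acquires a rational point in $L_n$. At step $n$, I use that $L_{n-1}$ is Hilbertian (Hilbertianity is preserved under finite extensions) together with a Noether normalization of $V_n$ and Hilbert's irreducibility theorem to produce closed points of $V_n$ whose residue field over $L_{n-1}$ has a prescribed Galois structure; then the solvability of split embedding problems over the Hilbertian field $L_{n-1}$ (cf.~\cite{FJ}) is used to realize this residue-field extension inside a new finite Galois subextension $L_n/K$ compatible with the next prescribed finite quotient of $\hat{F}_e$, extending $\varphi_{n-1}$ to $\varphi_n$.

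Setting $L=\bigcup_n L_n$, the field $L$ is Galois over $K$ with $\Gal(L/K)$ a quotient of $\hat{F}_e$, hence topologically finitely generated and small, so $L/K$ is small. Moreover $L$ is PAC: any absolutely irreducible $L$-variety is defined over some finite Galois subextension $M_j\subseteq L$, hence appears as $(M_j,V_j)$ in the enumeration and has a rational point in $L_j\subseteq L$. The principal obstacle is the coordination at each inductive step between the variety-driven requirement (the residue field forced by the chosen rational point) and the Galois-theoretic compatibility (fitting into the prescribed finite quotient of $\hat{F}_e$); the construction must simultaneously solve an embedding problem and force a specific Hilbertian specialization, which is where the flexibility of $\hat{F}_e$ for $e\ge 2$ and the well-developed theory of embedding problems over Hilbertian fields become essential.
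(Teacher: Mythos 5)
Your plan is to build $L$ as a \emph{Galois} PAC extension of $K$ with topologically finitely generated Galois group, so that smallness is automatic. Note first that for $K=\mathbb{Q}$ this would affirmatively answer the open question posed at the end of the paper (whether a small \emph{Galois} PAC extension of $\mathbb{Q}$ exists), so any such construction would have to overcome a real obstruction. The obstruction shows up precisely at your inductive step: you must simultaneously (i) enlarge $L_{n-1}$ so that $V_n$ acquires a rational point, and (ii) keep $\Gal(L_n/K)$ a quotient of $\hat{F}_e$ compatible with $\varphi_{n-1}$. For (i), the enlargement is forced to contain the residue field of a closed point of $V_n$, and the Galois closure over $K$ of that residue field is dictated by the monodromy of the chosen Noether normalization --- you do not get to choose it, and there is no bound keeping the number of generators of $\Gal(L_n/K)$ below $e$ as the tower grows. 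Moreover, the appeal to ``solvability of split embedding problems over the Hilbertian field $L_{n-1}$'' is not a theorem in the generality you need: over a merely Hilbertian field, general finite split embedding problems are not known to be solvable (for $K=\mathbb{Q}$ this is essentially the regular inverse Galois problem); the strong solvability results require the base field to be large or PAC in addition to Hilbertian, which is exactly what you are trying to build, not what you have. And even granting such a lifting tool, it would not reconcile the fixed residue-field extension from (i) with the prescribed finite quotient of $\hat{F}_e$ in (ii).

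The paper avoids all of this by \emph{not} aiming for a Galois extension. Using the stabilizing basis theorem it produces, for each curve $C_i$, a degree-$n_i$ map $C_i\to\mathbb{P}^1$ with both monodromy groups equal to $S_{n_i}$, and then Hilbertianity of $K$ gives a degree-$n_i$ field $E_i/K$ with $C_i(E_i)\neq\emptyset$ whose Galois closure $L_i$ satisfies $\Gal(L_i/K)=S_{n_i}$, the $L_i$ being linearly disjoint. The PAC extension is $E=\prod_i E_i$, and its Galois closure $L=\prod_i L_i$ has group $\Gamma=\prod_i S_{n_i}$, which is \emph{not} small (infinitely many index-$2$ open subgroups). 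Smallness of $E/K$ is instead a purely group-theoretic phenomenon: the closed subgroup $\Delta=\prod_i A_{n_i}$ is topologically $2$-generated, $\Gal(L/E)\cdot\Delta=\Gamma$, and intersecting with $\Delta$ injects the open overgroups of $\Gal(L/E)$ of index $r$ into the open subgroups of $\Delta$ of index $r$. So the extension $E/K$ is small without its Galois closure being small --- a possibility your approach, by insisting on a Galois tower, cannot exploit.
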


\begin{proof}
    Let $(C_i)_{i\in\mathbb{N}}$ be an enumeration of the geometrically integral $K$-curves. 
    For each $i$ we apply the stabilizing basis theorem \cite[Theorem 18.9.3]{FJ}     
    to obtain an 
    integer $n_i$ and a 
    non-constant rational map $f_i\colon C_i\to \mathbb{P}^1$ of degree $n_i$ with the following properties:
    \begin{enumerate}
        \item \label{coolaid} $5\leq n_1< n_2< n_3< \ldots$, and 
        \item The monodromy group of $f_i$ (that is, the Galois group of the Galois closure of $K(C_i)/K(\mathbb{P}^1)$ viewed as a permutation group acting on the generic fiber) and the geometric monodromy group of $f_i$ (i.e., the monodromy group over $\overline{K}$) are both $S_{n_i}$, for each $i$.
    \end{enumerate}
    As in the proof of \cite[Lemma 16.2.6]{FJ},
    since $K$ is Hilbertian, we may inductively construct  $a_i\in \mathbb{P}^{1}(K)$ such that, for each $i$, 
    \begin{enumerate}\addtocounter{enumi}{2}
        \item The fiber $f_i^{-1}(a_i)$ of $f_i$ above $a_i$ is integral. In particular, $f_i^{-1}(a_i)=\Spec(E_i)$ for some field extension $E_i/K$ of degree $n_i$. (In the notation of \cite[Lemma 16.2.6]{FJ}, $E_i=K(b_i)$.)
        \item If $L_i$ denotes the Galois closure of $E_i/K$, we have $\Gal(L_i/K)=S_{n_i}$ and $L_i$ is linearly disjoint over $K$ from the compositum $L_1\cdots L_{i-1}$. 
    \end{enumerate}
    Let $E=\prod_{i=1}^\infty E_i$ and $L=\prod_{i=1}^\infty L_i$ be the field composita. 
    By construction, $C_i(E)\neq\emptyset$ for every $i$, hence $E$ is PAC \cite[Theorem 11.2.3]{FJ}. 
    Also, by construction, $L/K$ is Galois, $\Gal(L/K) = \Gamma:= \prod_{i=1}^{\infty} S_{n_i}$, and $\Gal(L/E) = \Gamma_1:=\prod_{i=1}^{\infty} S_{n_i-1}$, where $S_{n-1}\leq S_n$ is the stabilizer of a point.
    Let $\Delta:= \prod_{i=1}^\infty A_{n_i}\leq\Gamma$. 
    Since the $A_{n_i}$ are distinct, simple non-abelian, and generated by two elements,  $\Delta$ is topologically generated by two elements, so in particular the number $N_r(\Delta) $ of open subgroups of $\Delta$ of index $r$ is finite, for every $r\in\mathbb{N}$.
    On the other hand, since $n_i\geq5\geq 3$, $S_{n_i-1}\cdot A_{n_i} = S_{n_i}$  (where we denote $H_1\cdot H_2 = \{ h_1 h_2 : h_1\in H_1, \ h_2\in H_2\}$). 
    So $\Gamma_1\cdot\Delta = \Gamma$. 
    
    Let $\mathcal{G}$ be the set of open subgroups of $\Gamma$ containing $\Gamma_1$. 
    For $G\in\mathcal{G}$, the inclusion $\Delta\rightarrow\Gamma$
    induces a bijection $\Delta/(G\cap\Delta)\rightarrow\Gamma/G$,
    hence $(\Gamma:G)=(\Delta:G\cap \Delta)$.
    In particular, the map $\mathcal{G} \ni G\mapsto G\cap \Delta$ is injective,
    which implies that there are at most $N_r(\Delta)$ many open subgroups of $\Gamma$ of index $r$ containing $\Gamma_1$. 
    So by the Galois correspondence, there at most $N_r(\Delta)$ subextensions of $E/K$ of degree $r$, for every $r$, hence $E/K$ is small. 
\end{proof}

\begin{proposition}\label{prop:smallex}
There exists a small extension $L/\mathbb{Q}$ 
and a smooth proper $\mathbb{Q}$-variety $X$
such that $X_L$ has HP but 
$X_K$ does not have WHP for any number field $K$.
\end{proposition}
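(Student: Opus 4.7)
The plan is to take $X$ a smooth projective geometrically irreducible $\Qq$-curve of genus $g\geq 2$ (for instance, the Fermat quartic $x^4+y^4=z^4$), and $L/\Qq$ a small extension that is simultaneously PAC and Hilbertian. Granted such $L$, the proposition follows immediately: since $L$ is Hilbertian PAC, Proposition~\ref{prop:hPAC} yields that every normal $L$-variety has HP, and in particular $X_L$ has HP; on the other hand, for every number field $K$, Faltings' theorem forces $X(K)$ to be finite, hence contained in a proper closed subscheme of $X_K$, so $X(K)$ is strongly thin in $X_K$ and therefore $X_K$ does not have WHP.

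The substantive work is the construction of $L$. Lemma~\ref{lem:smallPAC} already produces a small PAC extension $E$ of $\Qq$, so it remains to additionally secure Hilbertianity while preserving smallness. I would do this by enriching the inductive construction in the proof of Lemma~\ref{lem:smallPAC}: in parallel with the steps that place a rational point on each geometrically integral curve (for PAC), one also handles, at each stage, one more of the countably many finite split embedding problems over the partial field built so far. By the Fried--V\"olklein theorem invoked in the proof of Proposition~\ref{prop:hPAC}, the resulting countable PAC field then has $\omega$-free absolute Galois group and is therefore Hilbertian.

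The main obstacle is reconciling the Hilbertianization with the smallness bound. To keep the field small, the additional algebraic elements introduced at each step should be arranged to lie inside the tower of $S_{n_i}$-Galois extensions already present in the construction of Lemma~\ref{lem:smallPAC}, so that the two-generator subgroup $\Delta=\prod_{i=1}^\infty A_{n_i}$ continues to dominate the count of open subgroups of $\Gal(L/\Qq)$ of each finite index. If solving an embedding problem at some stage forces the appearance of a Galois quotient outside the $\prod S_{n_i}$ framework, the counting argument based on $\Delta$ collapses; choosing the order and form of the embedding problems to avoid this is where the technical heart of the proof lies.
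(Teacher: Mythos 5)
Your framework is correct and matches the paper's: same choice of $X$ (a curve of genus $\ge 2$), Faltings for the negative direction, Proposition~\ref{prop:hPAC} for the positive direction, and Lemma~\ref{lem:smallPAC} for the small PAC extension. However, there is a genuine gap in your construction of $L$: you identify the need for $L$ to be both PAC and Hilbertian, propose to ``enrich'' the inductive construction of Lemma~\ref{lem:smallPAC} to force $\omega$-freeness, and then honestly flag that you do not know how to do this while preserving the smallness of $L/\Qq$. This worry is the crux of your proposal, and you leave it unresolved.

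The point you are missing is that no additional construction is needed: Hilbertianity of $L$ is \emph{automatic} from the data you already have. Since $\Qq$ is Hilbertian, $\Pp^1_\Qq$ has HP, and since $L/\Qq$ is small, Corollary~\ref{cor:small} gives that $\Pp^1_L$ has HP, i.e.\ $L$ is Hilbertian. (This is the classical fact, cf.\ \cite[Prop.~16.11.1]{FJ}, that a small extension of a Hilbertian field is Hilbertian — and indeed the paper's whole Section~4 generalizes precisely this statement.) So the field $E$ produced directly by Lemma~\ref{lem:smallPAC} already works as your $L$; the ``technical heart'' you describe evaporates. This is exactly how the paper's proof proceeds, and it is a good illustration of the self-referential utility of the results in this note: the positive base-change theorem (Corollary~\ref{cor:small}) is itself an ingredient in producing the counterexample.
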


\begin{proof}
Let $X$ be any smooth proper $\mathbb{Q}$-curve of genus at least $2$,
e.g.~the Fermat curve $x^4+y^4=z^4$.
By Lemma \ref{lem:smallPAC}
there exists a small extension $L$ of $\mathbb{Q}$ which is PAC.
As $L/\mathbb{Q}$ is small and $\mathbb{Q}$ is Hilbertian,
so is $L$ (cf.~Proposition \ref{cor:small}).
So by Proposition \ref{prop:hPAC}, 
$X_L$ has HP.
By Falting's theorem, 
$X(K)$ is not Zariski-dense in $X$, 
so $X_{K}$ does not have WHP,
for any number field $K$.
\end{proof}

In particular, ``not HP'' and ``not WHP'' do not go up in chains
of number fields even when the union of the chain is a small extension of $\mathbb{Q}$.
We remark that the extension $L/\mathbb{Q}$
constructed here has Galois closure which is not small over $\mathbb{Q}$.

\begin{question}
Does there exist a small {\em Galois} extension $L/\mathbb{Q}$ with $L$ PAC?
\end{question}

\section*{Acknowledgements}

\noindent
A.F.~would like to thank Philip Dittmann for helpful discussions around Proposition \ref{WHPimpliesHilbertian}.
L.B.-S. was supported by the Israel Science Foundation (grant no.~702/19).
L.B.-S. and S.P. thank TU Dresden for hospitality during a research visit in December 2023 where part of this work was done.

\end{document}